\DeclareSymbolFont{rsfs}{U}{rsfs}{m}{n}
\DeclareSymbolFontAlphabet{\mathscrsfs}{rsfs}
\newcommand{\keywords}[1]{\par\noindent{\small\textbf{Keywords:} #1}}
\newcommand{\subjclass}[1]{\par\noindent{\small\textbf{AMS 2010 Subject Classification:} #1}}
\theoremstyle{definition}
\newtheorem{Def}{Definition}[section]
\theoremstyle{plain}
\newtheorem{Prop}[Def]{Proposition}
\newtheorem{thm}[Def]{Theorem}
\newcommand{\footremember}[2]{%
	\footnote{#2}
	\newcounter{#1}
	\setcounter{#1}{\value{footnote}}%
}
\newcommand{\footrecall}[1]{%
	\footnotemark[\value{#1}]%
}
\newcommand{\E}{E}
\newcommand{\Var}{\mathrm{Var}}
\newcommand{\Cov}{\mathrm{Cov}}
\newcommand{\dTV}{d_{\mathrm{TV}}}
\newcommand{\ip}[2]{\left\langle #1,#2\right\rangle}
\newcommand{\norm}[1]{\left\lVert #1\right\rVert}
\newcommand{\I}{I}
\newcommand{\kappaFour}{\kappa_{4}}
\newcolumntype{M}[1]{>{\raggedright}m{#1}}
\renewcommand{\epsilon}{\varepsilon}
\begin{document}

\title{Mathematical research with GPT-5: a Malliavin-Stein experiment}
\author{
  Charles-Philippe Diez\footremember{uni}{University of Luxembourg}%
  \and
 Lu\'{\i}s da Maia\footrecall{uni}%
  \and 
  Ivan Nourdin\footrecall{uni}%
}

\date{}

\maketitle

\begin{abstract}
On August 20, 2025, GPT-5 was reported to have solved an open problem in convex optimization. Motivated by this episode, we conducted a controlled experiment in the Malliavin--Stein framework for central limit theorems. Our objective was to assess whether GPT-5 could go beyond known results by extending a \emph{qualitative} fourth-moment theorem to a \emph{quantitative} formulation with explicit convergence rates, both in the Gaussian and in the Poisson settings. To the best of our knowledge, the derivation of such quantitative rates had remained an open problem, in the sense that it had never been addressed in the existing literature. The present paper documents this experiment, presents the results obtained, and discusses their broader implications.
\end{abstract}

\keywords{Gaussian analysis, Poisson approximation, Malliavin calculus, Probability theory, Stochastic processes, Artificial intelligence in research}

\subjclass{60G15, 60G55, 60H07, 60F05, 68T50}

\section{Introduction}

The starting point of this study is a post by Sébastien Bubeck \cite{Bubeck2025Thread} on X (Aug.~20, 2025), reporting that GPT-5 Pro had solved an open problem in convex optimization by improving a known bound from $1/L$ to $1.5/L$ within minutes. Beyond the claim and two screenshots (one showing the prompt, the other the AI-generated proof), no further details were provided regarding the methodology.

\medskip

Bubeck's post attracted considerable attention, particularly on social media. Many non-specialists perceived it as a historic moment, even a striking demonstration of the power of AI, now seemingly able to compete with mathematicians. The reaction of mathematicians and researchers in the field was, however, more nuanced. Among the most notable comments was that of Ernest Ryu \cite{Ryu}, an expert in convex optimization, who placed the experiment back into context. According to him, the demonstration proposed by GPT-5 relied mainly on a well-known ingredient, Nesterov’s Theorem, already familiar to specialists. In his view, an experienced researcher could have obtained an equivalent result within a few hours of work.

\medskip

Motivated by this episode, we designed a small and controlled experiment in an area we know very well: the Malliavin–Stein method \cite{NourdinPeccati2012}, a powerful tool in probability theory to study convergence towards the normal distribution. This method was introduced almost twenty years ago by the fourth-named author together with Giovanni Peccati. It combines two complementary ideas. The Stein method makes it possible to test whether a random object converges to the normal law and, importantly, to measure the speed of this convergence. The Malliavin calculus, on the other hand, provides a kind of differential framework for random variables in stochastic analysis, especially on Gaussian and Poisson spaces.
By bringing these two tools together, the Malliavin–Stein method not only shows that convergence takes place, but also gives explicit rates of convergence in settings where Malliavin calculus can be applied.

\medskip

Concretely, we started from a recent theorem by Basse-O’Connor, Kramer-Bang, and Svendsen  \cite{basse}, which established a \emph{qualitative} result (proving that a certain sequence of probabilistic objects converges) but without specifying the speed of convergence. We then asked GPT-5 to go further and transform this qualitative result into a \emph{quantitative} one, that is, to provide an explicit convergence rate. To the best of our knowledge, no published solution to this precise problem existed until today.

\medskip

After this very rough description of the mathematical content, aimed mainly at non-mathematicians, we invite readers who are not specialists (or who are not primarily concerned with the mathematical results) to proceed directly to Section~\ref{gpt}. For the others, we now briefly recall the context of our study and outline the results we obtained, with more precision.

\medskip

The classical \emph{fourth moment theorem} of Nualart and Peccati \cite{nualart-peccati-2005} states that, for a sequence of normalized multiple Wiener--Itô integrals of fixed order, convergence of the fourth moment to three is equivalent to convergence in distribution to $\mathcal N(0,1)$. This principle underlies numerous applications, most notably in establishing central limit theorems for functionals of infinite-dimensional Gaussian fields. Within the Malliavin--Stein framework, quantitative refinements can be expressed as bounds on the distance to Gaussianity in terms of the fourth cumulant; see \cite{NourdinPeccati2012}.

\medskip

Building on \cite{basse}, in which the authors established a \emph{qualitative} fourth moment theorem for \emph{sums of two} multiple Wiener--Itô integrals of orders $p$ and $q$ such that $p+q$ is odd, we make use of GPT-5 to obtain a \emph{quantitative} counterpart in total variation. We also provide a Poisson analogue. In the Poisson setting, mixed odd moments such as $\E[X^3Y]$ need not vanish when $X$ and $Y$ are multiple Wiener--Itô integrals of different parities, so we identify sufficient conditions to establish a similar type of result. The theorem still holds, and we exhibit a counterexample showing these conditions are essentially sharp. It is important to note that, to the best of our knowledge, neither the Gaussian refinement nor the Poisson analogue had previously appeared in the literature.

\medskip

Before turning to broader reflections on this unusual and somewhat disorienting AI-assisted workflow for mathematicians, we first present our mathematical contributions in a self-contained and usual manner, disregarding how they were obtained. The discussion of our GPT-5 protocol and what we believe to be its implications for research and doctoral training is deferred to Section~\ref{gpt}.

\medskip

\noindent\textit{Reading guide.} Section~\ref{gauss} recalls the necessary background and states the main quantitative result we have obtained in the Gaussian setting. Section~\ref{poisson} is devoted to the Poisson extension and its limitations. Section~\ref{gpt} documents the GPT-5 experiment, and Section~\ref{sec:reflections} offers ethical and educational reflections. The paper concludes with two appendices, reproducing the discussions we had with GPT-5 in the Gaussian and Poisson cases.

\section{Gaussian analysis and Wiener chaos}
\label{gauss}

This section presents the results we obtained in the Gaussian setting.

\subsection{Preliminaries}

Here, we recall the main tools and results concerning Gaussian analysis and Wiener chaos that will be needed later in the paper. 
Our aim is not to be exhaustive but to provide the essential background. For further details and complete proofs, we refer the reader to \cite{NourdinPeccati2012,Nualart2006}.

\subsubsection{Wiener space and multiple integrals}

Let $(E, \mathcal E, \mu)$ be a measurable space and $H=L^2(E,\mu)$ the associated Hilbert space. Let $W=\{W(h):h\in H\}$ be an isonormal Gaussian process. For $m\ge 1$, the $m$-th multiple Wiener-It\^o integral of a kernel $f\in L^2_s(\mu^m)$ (square integrable and symmetric in its arguments) is denoted $I_m(f)$ and satisfies the isometry
\[
E[I_m(f)I_m(g)] = m! \langle f,g\rangle_{L^2(\mu^m)}.
\]
The construction of $I_m$ starts by defining it on simple tensors $f=\mathbf 1_{A_1}\otimes\cdots\otimes \mathbf 1_{A_m}$ with disjoint sets $A_i$, extends linearly, and finally proceeds by $L^2$-density and symmetrization. 
The collection of all such integrals spans the $m$-th Wiener chaos.

\subsubsection{Orthogonality, contractions and product formula}

The Wiener chaoses are mutually orthogonal: if $F=I_p(f)$ and $G=I_q(g)$ with $p\ne q$, then $\E[FG]=0$.
For $f\in L^2_s(\mu^p)$ and $g\in L^2_s(\mu^q)$ and an integer $0\le r\le \min(p,q)$, the $r$-th contraction $f\otimes_r g$ belongs to $L^2(\mu^{p+q-2r})$ and is defined by contracting $r$ coordinates:
\begin{align}
(f\otimes_r g)(x_1,\dots,x_{p-r},y_1,\dots,y_{q-r}) &= \int_{E^r} f(x_1,\dots,x_{p-r},z_1,\dots,z_r)\notag \\
&\quad\times g(y_1,\dots,y_{q-r},z_1,\dots,z_r) d\mu(z_1)\cdots d\mu(z_r).\label{eq:product}
\end{align}
The symmetrized version is denoted $f\widetilde\otimes_r g$. 
The product formula states
\begin{equation}
I_p(f)\,I_q(g) = \sum_{r=0}^{\min(p,q)} r!\binom{p}{r}\binom{q}{r}\,I_{p+q-2r}(f\widetilde\otimes_r g).
\end{equation}

\subsubsection{Malliavin--Stein bound in total variation}
Let $\mathbb{D}^{1,2}$ be the domain of the Malliavin derivative $D$ and $L$ be the associated Ornstein--Uhlenbeck generator (with pseudo-inverse $L^{-1}$). For any centered, unit-variance $F\in \mathbb{D}^{1,2}$,
\begin{equation}\label{eq:MS-TV}
\dTV(F,\mathcal N(0,1))\ \le\ 2\,E\big|1-\ip{DF}{-DL^{-1}F}_{H}\big|
\ \le\ 2\sqrt{\Var\!\big(\ip{DF}{-DL^{-1}F}_{H}\big)}.
\end{equation}
This is standard in the Malliavin--Stein method; see, e.g., \cite[Th.\,5.2]{Nourdin}.
Furthermore, if $F=\I_m(f)$ then 
\begin{equation}\label{eq:Dt}
D_tF=m\,\I_{m-1}\!\big(f(\cdot,t)\big),\qquad
E\big[F^2\big]=m!\,\norm{f}^{2}_{H^{\otimes m}},
\end{equation}
and
$-DL^{-1}F=\frac1m DF$ by the chaos action of $L^{-1}$.

\subsection{Our main result}

\begin{thm}[Quantitative two-chaos fourth moment theorem]\label{thm:main}
For integers $p\neq q$, with $p$ odd, $q$ even, let
\begin{equation}
X = I_p(f), \qquad Y = I_q(g), \qquad Z = X+Y,
\end{equation}
satisfying $E[Z^2]=1$. Write $\kappa_4(Z)=\mathbb E[Z^4]-3$.
Then, we have
\begin{equation}
d_{\mathrm{TV}}(Z,N(0,1)) \;\le\; \sqrt{6\,\kappa_4(Z)},
\end{equation}
with $d_{\mathrm{TV}}$ the total variation distance.
In particular, if $Z_n = I_p(f_n) + I_q(g_n)$ with $E[Z_n^2]=1$ and $\kappa_4(Z_n)\to 0$, then
\[
d_{\mathrm{TV}}(Z_n,N(0,1)) \;\longrightarrow\;0.
\]
\end{thm}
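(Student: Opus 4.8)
\noindent\textit{Sketch of the intended argument.}
The plan is to run the Malliavin--Stein bound \eqref{eq:MS-TV}: setting
\[
T\ :=\ \ip{DZ}{-DL^{-1}Z}_{H},\qquad E[T]=1,
\]
it suffices to prove $\Var(T)\le\tfrac{3}{2}\,\kappa_4(Z)$, for then $d_{\mathrm{TV}}(Z,N(0,1))\le 2\sqrt{\Var(T)}\le\sqrt{6\,\kappa_4(Z)}$ and the convergence statement for $Z_n$ is immediate. First I would make $T$ explicit: since $-DL^{-1}X=\tfrac{1}{p}DX$ and $-DL^{-1}Y=\tfrac{1}{q}DY$ by the chaos action of $L^{-1}$, using \eqref{eq:Dt} and the product formula,
\[
T\ =\ \tfrac{1}{p}\norm{DX}_{H}^{2}\ +\ \tfrac{1}{q}\norm{DY}_{H}^{2}\ +\ \Big(\tfrac{1}{p}+\tfrac{1}{q}\Big)\ip{DX}{DY}_{H}\ =:\ T_1+T_2+T_3,
\]
and $E[T]=E[X^2]+E[Y^2]=1$ because chaoses of distinct order are orthogonal ($p\neq q$).

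The parity hypothesis is the structural engine. Since $p-1$ is even and $q-1$ is odd, $T_1$ and $T_2$ are sums of multiple integrals of \emph{even} order while $T_3$ is a sum of multiple integrals of \emph{odd} order (each $p+q-2r$ is odd); hence $T_1+T_2\perp T_3$ in $L^2$ and $\Var(T)=\Var(T_1+T_2)+\Var(T_3)$. The same parity forces the mixed odd moments to vanish, $E[X^3Y]=E[XY^3]=0$, so expanding $Z^4$ and using $E[XY]=0$ gives $\kappa_4(Z)=\kappa_4(X)+\kappa_4(Y)+6\,\Cov(X^2,Y^2)$. I would then use the standard Malliavin integration-by-parts identity $\kappa_4(F)=3\,\Cov\!\big(F^2,\ip{DF}{-DL^{-1}F}_{H}\big)$ (coming from $E[F^4]=3\,E[F^2\ip{DF}{-DL^{-1}F}_{H}]$ when $E[F]=0$); applied to $Z$ and split by parity ($XY\perp T_1+T_2$ and $X^2+Y^2\perp T_3$) it yields
\[
\kappa_4(Z)\ =\ 3\,\Cov(X^2+Y^2,\ T_1+T_2)\ +\ 6\,\Cov(XY,\ T_3).
\]

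The key computation, a direct use of the product formula, is that in each fixed chaos the component of $T_i$ is a scalar multiple of the corresponding component of a square: the order-$2k$ chaos part of $T_1$ equals $\tfrac{p-k}{p}\in[0,1)$ times that of $X^2$, the order-$2k$ chaos part of $T_2$ equals $\tfrac{q-k}{q}\in[0,1)$ times that of $Y^2$, and the order-$(p+q-2r)$ chaos part of $T_3$ equals $\big(\tfrac{1}{p}+\tfrac{1}{q}\big)r$ times that of $XY$, with $\big(\tfrac{1}{p}+\tfrac{1}{q}\big)r\le\tfrac{p+q}{\max(p,q)}\le 2$. For the odd part this closes at once: writing $A_r\ge 0$ for the squared $L^2$-norm of the order-$(p+q-2r)$ chaos of $XY$, one has $\Cov(XY,T_3)=\sum_r\big(\tfrac{1}{p}+\tfrac{1}{q}\big)r\,A_r\ge 0$ and $\Var(T_3)=\sum_r\big[\big(\tfrac{1}{p}+\tfrac{1}{q}\big)r\big]^2A_r\le 2\,\Cov(XY,T_3)$; since $6\,\Cov(XY,T_3)$ is precisely the odd contribution to $\kappa_4(Z)$, the term $\Var(T_3)$ is amply absorbed, and it will be enough to show $\Var(T_1+T_2)\le\tfrac{9}{2}\,\Cov(X^2+Y^2,T_1+T_2)$.

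The even part is the heart of the matter, and where I expect the main obstacle. Applying the classical Nualart--Peccati fixed-chaos estimate $\Var\!\big(\tfrac{1}{m}\norm{DF}_{H}^{2}\big)\le\tfrac{m-1}{3m}\kappa_4(F)$ to $X$ and to $Y$ gives $\Var(T_1)\le\tfrac{1}{3}\kappa_4(X)$ and $\Var(T_2)\le\tfrac{1}{3}\kappa_4(Y)$; the integration-by-parts identity for $X$ and $Y$ also gives $\kappa_4(X)=3\,\Cov(X^2,T_1)\ge 0$ and $\kappa_4(Y)=3\,\Cov(Y^2,T_2)\ge 0$, the nonnegativity being visible because each chaos component of $T_1$ (resp. $T_2$) is a nonnegative multiple of the like-order component of $X^2$ (resp. $Y^2$). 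What is left is the cross term $\Cov(T_1,T_2)=\sum_k\tfrac{(p-k)(q-k)}{pq}\,c_k$, together with $\Cov(X^2,T_2)+\Cov(Y^2,T_1)=\sum_k\big(\tfrac{p-k}{p}+\tfrac{q-k}{q}\big)c_k$, where $c_k$ is the inner product of the order-$2k$ chaos components $a_k$ of $X^2$ and $b_k$ of $Y^2$: these $c_k$ are \emph{not} of one sign, and a chaos-by-chaos comparison fails precisely at the top chaos of $X^2$, where the damping factor $\tfrac{p-k}{p}$ vanishes while $c_k$ may be strongly negative. The hard part is therefore to extract, from the positive semidefiniteness of the $2\times 2$ Gram matrices with entries $\norm{a_k}^{2},c_k,c_k,\norm{b_k}^{2}$, from the symmetry of the kernels $f$ and $g$, and from the factor $6$ multiplying $\Cov(X^2,Y^2)$, the cancellation that the parity $p+q$ odd is guaranteeing, so as to obtain $\Var(T_1+T_2)\le\tfrac{9}{2}\,\Cov(X^2+Y^2,T_1+T_2)$. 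Combined with the odd-part estimate this gives $\Var(T)\le\tfrac{3}{2}\,\kappa_4(Z)$, hence the stated bound; letting $\kappa_4(Z_n)\to 0$ then gives $d_{\mathrm{TV}}(Z_n,N(0,1))\to 0$.
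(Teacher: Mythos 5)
Your reductions are all correct as far as they go (the Stein bound, the decomposition $T=T_1+T_2+T_3$, the parity orthogonality $\Var(T)=\Var(T_1+T_2)+\Var(T_3)$, the identity $\kappaFour(Z)=3\,\Cov(Z^2,T)$ split by parity, and the chaos-by-chaos proportionality factors $\tfrac{p-k}{p}$, $\tfrac{q-k}{q}$, $(\tfrac1p+\tfrac1q)r$), and your treatment of the odd part is sound. But the proof is not complete: the inequality $\Var(T_1+T_2)\le\tfrac92\,\Cov(X^2+Y^2,\,T_1+T_2)$, which you yourself flag as ``the hard part'', is exactly where the whole difficulty sits, and nothing in your argument establishes it --- not even the nonnegativity of $\Cov(X^2+Y^2,T_1+T_2)$. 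The obstruction is the one you identify: $\Cov(T_1,T_2)$ and $\Cov(X^2,T_2)+\Cov(Y^2,T_1)$ are \emph{weighted} sums of the mixed chaos inner products $c_k$, which are not sign-definite; the positivity of $\Cov(X^2,Y^2)=\sum_k c_k$ only emerges after the Nourdin--Rosinski rearrangement \eqref{eq:CovY2Z2}, and that identity does not obviously survive the insertion of the damping weights $\tfrac{p-k}{p}$, $\tfrac{q-k}{q}$. So, as written, this is a genuine gap rather than a routine verification.

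The missing idea, which is how the paper closes the argument, is to \emph{avoid comparing the even parts of $T$ with $X^2+Y^2$ altogether}. Bound $\Var(T_1)\le\tfrac13\kappaFour(X)$ and $\Var(T_2)\le\tfrac13\kappaFour(Y)$ separately by the classical single-chaos estimate \eqref{eq:SingleChaosStein}; handle the odd part as you do (your $A_r$ are precisely the $W_s$ of \eqref{eq:CovY2Z2}, and \eqref{eq:Exact-s2} gives $\E\ip{DX}{DY}^2=\sum_s s^2W_s\le m^2\sum_s W_s\le m^2\,\Cov(X^2,Y^2)$); then absorb everything into $\kappaFour(Z)$ via the parity decomposition \eqref{eq:CumulantParity}, in which \emph{all three} terms are nonnegative, so that $\kappaFour(X)+\kappaFour(Y)\le\kappaFour(Z)$ and $\Cov(X^2,Y^2)\le\kappaFour(Z)/6$. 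Concretely, you can repair your own scheme by replacing the problematic target with $\Var(T_1+T_2)\le2\Var(T_1)+2\Var(T_2)\le\tfrac23\big(\kappaFour(X)+\kappaFour(Y)\big)\le\tfrac23\kappaFour(Z)$ and, for the odd part, $\Var(T_3)=\sum_r\big[(\tfrac1p+\tfrac1q)r\big]^2W_r\le4\sum_rW_r\le4\,\Cov(X^2,Y^2)\le\tfrac23\kappaFour(Z)$, whence $\Var(T)\le\tfrac43\kappaFour(Z)\le\tfrac32\kappaFour(Z)$, which is all that the Malliavin--Stein bound \eqref{eq:MS-TV} requires. The lesson is that the only positivity needed is that of $\kappaFour(X)$, $\kappaFour(Y)$ and $\Cov(X^2,Y^2)$; no weighted, chaos-by-chaos comparison of the even parts is necessary.
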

\noindent
{\bf Proof}.
We split the argument into four steps. Throughout, set $\sigma_p^2=\E[Y^2]$, $\sigma_q^2=\E[Z^2]$; then $\sigma_p^2+\sigma_q^2=1$.

\medskip
\noindent
{\it Step 1: Malliavin--Stein reduction}.
Applying \eqref{eq:MS-TV} with $F=Z=X+Y$ and using $-DL^{-1}I_m(f)=\frac1m DI_m(f)$,
\begin{align}
\ip{DZ}{-DL^{-1}Z}
&= \frac1p\norm{DX}^{2}+\frac1q\norm{DY}^{2}+\Big(\frac1p+\frac1q\Big)\ip{DX}{DY}.
\end{align}
Define the centered pieces
\begin{equation}\label{eq:ApAqT}
A_p:=\sigma_p^2-\frac1p\norm{DX}^{2},\qquad
A_q:=\sigma_q^2-\frac1q\norm{DY}^{2},\qquad
T:=\Big(\frac1p+\frac1q\Big)\ip{DX}{DY}.
\end{equation}
Since $\sigma_p^2+\sigma_q^2=1$ and $\E\ip{DX}{DY}=0$ (orthogonality of different chaoses),
\[
1-\ip{DZ}{-DL^{-1}Z}=A_p+A_q-T,
\]
and hence
\begin{equation}\label{eq:VarStein}
\Var\!\big(\ip{DZ}{-DL^{-1}Z}\big)
=\E\big[(A_p+A_q-T)^2\big]
\le 3\Big(\E[A_p^2]+\E[A_q^2]+\E[T^2]\Big).
\end{equation}

\medskip
\noindent
{\it Step 2: Single-chaos control of $A_p$ and $A_q$}.
For a fixed chaos $F=\I_m(f)$ with variance $\sigma^2$, the identity
\begin{equation}\label{eq:SingleChaosStein}
\E\Big(\sigma^{2}-\frac1m\norm{DF}^{2}\Big)^{2}\ \le\ \frac13\,\big(\E[F^{4}]-3\sigma^{4}\big)
=\frac13\,\kappaFour(F)
\end{equation}
is classical (see, e.g., \cite[(5.61)]{Nourdin}). Applying \eqref{eq:SingleChaosStein} with $F=X$ and $F=Y$ gives
\begin{equation}\label{eq:ApAqBounds}
\E[A_p^2]\le \frac13\,\kappaFour(X),\qquad
\E[A_q^2]\le \frac13\,\kappaFour(Y).
\end{equation}

\medskip
\noindent
{\it Step 3: Exact expansion of $\E\ip{DX}{DY}^{2}$ and comparison with $\Cov(X^{2},Y^{2})$}.
Write $m:=\min\{p,q\}$. Using \eqref{eq:Dt} and \eqref{eq:product} for each $t$, and integrating over $t$,
\begin{equation}\label{eq:DYDZ-expansion}
\ip{DX}{DY}\ =\ pq\sum_{s=1}^{m} (s-1)!\binom{p-1}{s-1}\binom{q-1}{s-1}\,
\I_{p+q-2s}\!\big(f\widetilde{\otimes}_s g\big).
\end{equation}
By isometry and orthogonality of chaoses,
\begin{equation}\label{eq:EDYDZ2}
\E\ip{DX}{DY}^{2}
= p^{2}q^{2}\sum_{s=1}^{m}\Big[(s-1)!\binom{p-1}{s-1}\binom{q-1}{s-1}\Big]^{2}\,
(p+q-2s)!\,\big\| f\widetilde{\otimes}_{s}g\big\|^{2}.
\end{equation}
On the other hand, by  \cite[(3.5)]{nourdin-rosinski},
\begin{equation}\label{eq:CovY2Z2}
\Cov(X^{2},Y^{2})
= \sum_{s=1}^{m}\underbrace{\Big[s!\binom{p}{s}\binom{q}{s}\Big]^{2}(p+q-2s)!\,\big\| f\widetilde{\otimes}_{s}g\big\|^{2}}_{=:W_s}
\;+\;\underbrace{p!\,q!\sum_{s=1}^{m}\binom{p}{s}\binom{q}{s}\,\norm{f\otimes_{s}g}^{2}}_{\ge 0}.
\end{equation}

Binomial identities yield
\[
\Big[pq\,(s-1)!\binom{p-1}{s-1}\binom{q-1}{s-1}\Big]^{2}(p+q-2s)!\,\big\|f\widetilde{\otimes}_{s}g\big\|^{2}
= s^{2}W_s.
\]
Summing over $s$ gives the \emph{exact identity}
\begin{equation}\label{eq:Exact-s2}
\E\ip{DX}{DY}^{2}\ =\ \sum_{s=1}^{m} s^{2}\,W_s,
\qquad\text{with}\quad W_s\ \text{as in \eqref{eq:CovY2Z2}.}
\end{equation}
Since $W_s\ge 0$, we immediately obtain the universal comparison
\begin{equation}\label{eq:CrossBound}
\E\ip{DX}{DY}^{2}\ \le\ m^{2}\sum_{s=1}^{m} W_s\ \le\ m^{2}\,\Cov(X^{2},Y^{2}).
\end{equation}
Consequently, from \eqref{eq:ApAqT},
\begin{equation}\label{eq:ET2}
\E[T^{2}] = \Big(\tfrac1p+\tfrac1q\Big)^{2}\,\E\ip{DX}{DY}^{2}
\ \le 4\,\Cov(X^{2},Y^{2}).
\end{equation}

\medskip
\noindent
{\it Step 4: Parity-driven fourth-cumulant decomposition and conclusion}.
For general square-integrable $U,V$, one has
\[
\kappaFour(U+V)=\kappaFour(U)+\kappaFour(V)+6\,\Cov(U^{2},V^{2})+4\,\E[U^{3}V]+4\,\E[UV^{3}].
\]
If $U=X=\I_p(f)$ with $p$ odd and $V=Y=\I_q(g)$ with $q$ even, then the mixed odd terms vanish:
\begin{equation}\label{eq:ParityVanish}
\E\big[X^{3}Y\big]=\E\big[XY^{3}\big]=0,
\end{equation}
because in the product formula for (say) $X^{3}Y$ no zero-th chaos can appear (parity mismatch prevents total order from being zero). Hence
\begin{equation}\label{eq:CumulantParity}
\kappaFour(Z)=\kappaFour(X)+\kappaFour(Y)+6\,\Cov(X^{2},Y^{2}),
\end{equation}
with all three terms on the right nonnegative (fixed-chaos fourth cumulants are nonnegative, and each summand in \eqref{eq:CovY2Z2} is nonnegative). In particular,
\begin{equation}\label{eq:CovBoundByKappa}
\Cov(X^{2},Y^{2})\ \le\ \frac{\kappaFour(Z)}{6}\qquad\text{and}\qquad
\kappaFour(X)+\kappaFour(Y)\ \le\ \kappaFour(Z).
\end{equation}

Now combining \eqref{eq:VarStein}, \eqref{eq:ApAqBounds}, \eqref{eq:ET2} and \eqref{eq:CovBoundByKappa}
yields
\[
\Var\!\big(\ip{DZ}{-DL^{-1}Z}\big)\ \le\ \frac32\,\kappaFour(Z).
\]
Plugging this into  (\ref{eq:MS-TV}) implies the desired conclusion.
\qed

\section{Poisson framework}\label{poisson}

In this section, we aim to establish the main results in the Poisson framework, in close analogy with the Gaussian case. 

\subsection{Preliminaries}

First, we briefly recall the basic setup and notations in the Poisson space.

\subsubsection{Multiple Poisson--It\^o integrals}

Let $\eta$ be a Poisson random measure on $(E,\mathcal E)$ with control $\mu$, and let $\hat\eta=\eta-\mu$ be its compensated version. For $m\ge 1$ and $f\in L^2_s(\mu^m)$, the multiple Poisson integral $I_m^\eta(f)$ is defined by
\begin{align*}
I_m^\eta(f) &= \int_{E^m} f(x_1,\dots,x_m)\, \hat\eta(dx_1)\cdots\hat\eta(dx_m).
\end{align*}
The collection of all such integrals spans the $m$-th Poisson chaos, denoted $C_m$.
As in the Gaussian case, one has the isometry
\[
\E[I_m^\eta(f)I_m^\eta(g)] = m! \,\langle f,g\rangle_{L^2(\mu^m)},
\]
and Poisson chaoses are mutually orthogonal: if $F=I_p^\eta(f)$ and $G=I_q^\eta(g)$ with $p\ne q$, then $\E[FG]=0$.

\subsubsection{Fourth cumulant and positivity}\label{pos}

For $F=I_p^\eta(f)$, the fourth cumulant satisfies $\kappa_4(F)=\E[F^4]-3\E[F^2]^2\ge 0$, see \cite[(2.5)]{poisson}. Moreover, if $F\in C_p$ and $G\in C_q$, then $\mathrm{Cov}(F^2,G^2)\ge 0$ as a consequence of \cite[(2.4)]{poisson}. These positivity properties play a crucial role in fourth-moment theorems on the Poisson space.

\subsection{A Poisson counterpart of Theorem \ref{thm:main}}

We now present the result we obtained as a Poisson analogue of Theorem~\ref{thm:main}.

\medskip

For two given integers $p\neq q$, let
\begin{equation}\label{assump}
X_n = I_p^\eta(f_n), \qquad Y_n = I_q^\eta(g_n), \qquad Z_n = X_n+Y_n, \qquad \E[Z_n^2]=1.
\end{equation}
By orthogonality, $\E[X_nY_n]=0$. 

\begin{thm}[Fourth moment theorem under vanishing odd moments]\label{thm:positive}
In addition to (\ref{assump}), assume that
\begin{equation}\label{assump2}
\E[X_n^3Y_n] \to 0, \qquad \E[X_nY_n^3]\to 0.
\end{equation}
Then
\[
\E[Z_n^4]\to 3 \quad\Longrightarrow\quad Z_n \Rightarrow \mathcal N(0,1).
\]
\end{thm}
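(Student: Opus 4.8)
The plan is to reduce the statement, through the fourth-cumulant decomposition, to the vanishing of all the joint fourth cumulants of the pair $(X_n,Y_n)$, and then to invoke a multivariate fourth-moment theorem on the Poisson space. First I would record the general decomposition valid for any square-integrable $U,V$,
\[
\kappa_4(U+V)=\kappa_4(U)+\kappa_4(V)+6\,\Cov(U^2,V^2)+4\,\E[U^3V]+4\,\E[UV^3],
\]
apply it with $U=X_n$, $V=Y_n$, and use $\E[Z_n^2]=1$ to rewrite the hypothesis $\E[Z_n^4]\to 3$ as $\kappa_4(Z_n)\to 0$. Subtracting the mixed odd contributions, which vanish in the limit by assumption (\ref{assump2}), isolates the block
\[
\kappa_4(X_n)+\kappa_4(Y_n)+6\,\Cov(X_n^2,Y_n^2)=\kappa_4(Z_n)-4\,\E[X_n^3Y_n]-4\,\E[X_nY_n^3]\longrightarrow 0.
\]

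Next I would exploit the positivity properties recalled in Section~\ref{pos}: on the Poisson space $\kappa_4(X_n)\ge 0$, $\kappa_4(Y_n)\ge 0$ and $\Cov(X_n^2,Y_n^2)\ge 0$. A sum of nonnegative quantities tending to zero forces each summand to zero, so $\kappa_4(X_n)\to 0$, $\kappa_4(Y_n)\to 0$ and $\Cov(X_n^2,Y_n^2)\to 0$. The key observation is that these three limits, together with (\ref{assump2}) and the orthogonality relation $\E[X_nY_n]=0$, say exactly that every joint fourth cumulant of the vector $(X_n,Y_n)$ vanishes asymptotically: indeed $\mathrm{cum}_4(X_n)=\kappa_4(X_n)\to 0$ and $\mathrm{cum}_4(Y_n)=\kappa_4(Y_n)\to 0$, while $\mathrm{cum}(X_n,X_n,X_n,Y_n)=\E[X_n^3Y_n]\to 0$, $\mathrm{cum}(X_n,Y_n,Y_n,Y_n)=\E[X_nY_n^3]\to 0$ and $\mathrm{cum}(X_n,X_n,Y_n,Y_n)=\Cov(X_n^2,Y_n^2)\to 0$ (all lower-order terms drop out because $\E[X_nY_n]=0$).

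Finally I would pass to the joint Gaussian limit. The covariance matrix of $(X_n,Y_n)$ is $\mathrm{diag}(\sigma_{p,n}^2,\sigma_{q,n}^2)$, where $\sigma_{p,n}^2=\E[X_n^2]$, $\sigma_{q,n}^2=\E[Y_n^2]$ and $\sigma_{p,n}^2+\sigma_{q,n}^2=1$; along any subsequence I may assume $\sigma_{p,n}^2\to a$ and $\sigma_{q,n}^2\to 1-a$. Since the covariance matrix converges and all fourth cumulants vanish, the multivariate fourth-moment theorem on the Poisson space (the vector-valued counterpart of the single-chaos results underlying \cite{poisson}) yields $(X_n,Y_n)\Rightarrow(N_1,N_2)$ with $N_1\sim N(0,a)$ and $N_2\sim N(0,1-a)$ independent. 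The continuous mapping theorem then gives $Z_n=X_n+Y_n\Rightarrow N_1+N_2\sim N(0,1)$, and since the limit does not depend on the subsequence the whole sequence converges. The hard part will be the joint-convergence step: unlike the Gaussian situation of Theorem~\ref{thm:main}, which could be treated directly on $Z$, here the difference-operator calculus produces additional Poisson-specific terms in the multivariate Malliavin--Stein bound, and the real work is to verify that the vanishing of the marginal fourth cumulants together with $\Cov(X_n^2,Y_n^2)\to 0$ controls precisely the off-diagonal and remainder contributions. One cannot instead apply a fourth-moment theorem to $Z_n$ as a single object, since for genuine sums of chaoses $\E[Z_n^4]\to 3$ need not imply asymptotic normality — which is exactly why hypothesis (\ref{assump2}) is imposed.
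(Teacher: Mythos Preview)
Your argument is correct and follows the same skeleton as the paper's proof: expand $\kappa_4(Z_n)$, use the positivity facts from Section~\ref{pos} to force $\kappa_4(X_n)\to 0$ and $\kappa_4(Y_n)\to 0$, pass to a subsequence so the variances converge, obtain joint Gaussian convergence of $(X_n,Y_n)$, and apply the continuous mapping theorem. The only difference is in how you reach the joint limit. You route through the vanishing of \emph{all} mixed fourth cumulants of $(X_n,Y_n)$ and then invoke a general multivariate fourth-moment criterion, flagging this as ``the hard part'' where Poisson-specific remainder terms must be controlled. The paper takes a shorter path that bypasses this work: once $\kappa_4(X_n)\to 0$ and $\kappa_4(Y_n)\to 0$, the \emph{marginal} fourth-moment theorem on Poisson chaos \cite[Corollary~1.3]{poisson} already gives $X_n\Rightarrow\mathcal N(0,\sigma_p^2)$ and $Y_n\Rightarrow\mathcal N(0,\sigma_q^2)$ separately, and then the Peccati--Tudor type theorem for Poisson chaoses \cite[Corollary~1.8]{poisson} upgrades componentwise convergence to joint convergence using only the orthogonality $\E[X_nY_n]=0$. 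So the additional information $\Cov(X_n^2,Y_n^2)\to 0$ and the mixed third-order terms are not actually needed for the joint step, and the difficulty you anticipate is already absorbed into the two cited corollaries of \cite{poisson}.
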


\begin{proof}
We compute the fourth cumulant:
\[
\kappa_4(Z_n) = \kappa_4(X_n)+\kappa_4(Y_n)+6\,\mathrm{Cov}(X_n^2,Y_n^2)+4 E[X_n^3Y_n]+4 E[X_nY_n^3].
\]
On the Poisson space, each individual fourth cumulant is nonnegative and $\mathrm{Cov}(X_n^2,Y_n^2)\ge 0$, see Section \ref{pos}. By assumption, the mixed odd terms vanish in the limit. Therefore, if $\E[Z_n^4]\to 3$, we obtain $\kappa_4(Z_n)\to 0$, which forces $\kappa_4(X_n)\to 0$ and $\kappa_4(Y_n)\to 0$. 

The fourth moment theorem on the Poisson chaos (see \cite[Corollary 1.3]{poisson}) then yields $X_n\Rightarrow \mathcal N(0,\sigma_p^2)$ and $Y_n\Rightarrow \mathcal N(0,\sigma_q^2)$ with $\sigma_p^2+\sigma_q^2=1$ (possibly along a subsequence). Since $\E[X_nY_n]=0$, the Peccati--Tudor type theorem for Poisson chaoses (see \cite[Corollary 1.8]{poisson}) implies $(X_n,Y_n)\Rightarrow (G_p,G_q)$ with $G_p,G_q$ independent Gaussian variables. Hence $Z_n=X_n+Y_n\Rightarrow \mathcal N(0,1)$. 
\end{proof}

\subsection{A counterexample when (\ref{assump2}) is not satisfied}

Consider the following particular case: take a measurable set $A$ with $\mu(A)=1$, and set
\[
U := I_1^\eta(\mathbf 1_A) = N_A - 1,
\]
where $N_A$ is Poisson distributed with mean $1$, and
\[
V := I_2^\eta(\mathbf 1_A^{\otimes 2}) = (N_A - 1)^2 - N_A.
\]
Clearly, $U\in C_1$ and $V\in C_2$, with $E[U]=E[V]=0$, $\mathrm{Var}(U)=1$, $\mathrm{Var}(V)=2$, and $E[UV]=0$.

For $\alpha\in\mathbb R$, define
\[
S_\alpha := c(\alpha)(U + \alpha V), \qquad c(\alpha) := \frac{1}{\sqrt{1+2\alpha^2}},
\]
so that $\mathrm{Var}(S_\alpha)=1$.

\begin{Prop}\label{thm:counterexample}
There exists $\alpha_*\in\mathbb R$ such that the random variable $S_{\alpha_*}$ satisfies
\[
E[S_{\alpha_*}^2]=1, \qquad E[S_{\alpha_*}^4]=3,
\]
while $S_{\alpha_*}$ is not Gaussian. In fact, $E[S_{\alpha_*}^3]\neq 0$.
\end{Prop}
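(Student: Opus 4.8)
The strategy is to reduce the problem to a one-variable root-finding argument by making the fourth cumulant of $S_\alpha$ an explicit function of $\alpha$. First I would compute the relevant joint moments of the pair $(U,V)$, where $U=N_A-1$ and $V=(N_A-1)^2-N_A$ with $N_A\sim\mathrm{Poisson}(1)$; since everything is a polynomial in a single $\mathrm{Poisson}(1)$ variable, all of $E[U^iV^j]$ for $i+j\le 4$ are finite explicit rationals, obtainable from the Poisson factorial moments (equivalently, the Touchard/Bell-polynomial values at $1$). In particular one needs $E[U^3]$, $E[U^2V]$, $E[UV^2]$, $E[V^3]$, and the degree-four moments $E[U^4]$, $E[U^3V]$, $E[U^2V^2]$, $E[UV^3]$, $E[V^4]$, together with the already-stated $\mathrm{Var}(U)=1$, $\mathrm{Var}(V)=2$, $E[UV]=0$.

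Next I would form $\kappa_4(S_\alpha)=E[S_\alpha^4]-3$. Writing $S_\alpha=c(\alpha)(U+\alpha V)$ with $c(\alpha)^2=1/(1+2\alpha^2)$, we have
\begin{equation}
\kappa_4(S_\alpha)=\frac{\kappa_4(U+\alpha V)}{(1+2\alpha^2)^2},
\end{equation}
and $\kappa_4(U+\alpha V)$ expands via the cumulant identity quoted in the proof of Theorem~\ref{thm:main},
\begin{equation}
\kappa_4(U+\alpha V)=\kappa_4(U)+\alpha^4\kappa_4(V)+6\alpha^2\,\mathrm{Cov}(U^2,V^2)+4\alpha\,E[U^3V]+4\alpha^3\,E[UV^3],
\end{equation}
so it is an explicit quartic polynomial $P(\alpha)$ in $\alpha$. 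The key observation is that $\kappa_4(S_\alpha)=0$ iff $P(\alpha)=0$, and because the odd-degree coefficients $4E[U^3V]$ and $4E[UV^3]$ are nonzero (this is exactly the Poisson parity failure the paper emphasizes), $P$ is not an even polynomial. One then checks that $P(0)=\kappa_4(U)>0$ (indeed $U=N_A-1$ is a centered Poisson, whose fourth cumulant equals its mean, namely $1$), while the sign of the odd terms makes $P$ negative for a suitable small value of $\alpha$ of the appropriate sign; by the intermediate value theorem there is a root $\alpha_*\neq 0$. This gives $E[S_{\alpha_*}^2]=1$ and $E[S_{\alpha_*}^4]=3$.

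It remains to rule out Gaussianity at $\alpha_*$. I would argue directly via the third moment: $E[S_{\alpha_*}^3]=c(\alpha_*)^3\bigl(E[U^3]+3\alpha_*E[U^2V]+3\alpha_*^2E[UV^2]+\alpha_*^3E[V^3]\bigr)$, which is an explicit cubic $Q(\alpha_*)$. Since a centered Gaussian has vanishing third moment, it suffices to show $Q(\alpha_*)\neq 0$. Here $E[U^3]=1$ (third cumulant of centered $\mathrm{Poisson}(1)$, and $U$ has zero mean so the third moment equals the third cumulant), and generically $Q$ does not share the root $\alpha_*$ of $P$; concretely one verifies $\gcd$-style that $Q(\alpha_*)=0$ would force an algebraic relation incompatible with the computed coefficients, or simply evaluates $Q$ at the explicit $\alpha_*$. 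Thus $S_{\alpha_*}$ has a nonzero third moment and in particular is not Gaussian.

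\medskip
\noindent\textbf{Main obstacle.} The only real work is bookkeeping: computing the nine-or-so mixed moments of $(U,V)$ correctly — $V$ is quadratic in $N_A$, so $E[U^iV^j]$ involves moments of $N_A$ up to order $2j+i\le 8$, and a sign slip there would propagate everywhere. Once the coefficients of $P$ and $Q$ are pinned down, the intermediate-value argument for the existence of $\alpha_*$ and the check $Q(\alpha_*)\neq 0$ are routine; the conceptual point — that the odd cross-cumulants are what make the fourth-moment criterion fail on Poisson space — is already isolated. A mild subtlety is ensuring $\alpha_*$ is chosen so that both $\kappa_4(S_{\alpha_*})=0$ and $Q(\alpha_*)\neq 0$ simultaneously; if the first root found happened to also kill $Q$, one would pick another root of $P$ (it has up to four) or perturb the construction, but with the explicit numbers this does not occur.
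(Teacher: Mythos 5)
Your plan is correct and follows essentially the same route as the paper: compute the explicit mixed moments of $(U,V)$, reduce $E[S_\alpha^4]=3$ to an explicit quartic in $\alpha$ (your cumulant decomposition gives exactly the paper's polynomial $200\alpha^4+224\alpha^3+96\alpha^2+24\alpha+1$), exhibit a real root $\alpha_*$, and rule out Gaussianity by checking the third moment $Q(\alpha_*)\neq 0$. The only cosmetic differences are that you obtain the root by an intermediate-value sign check where the paper quotes the numerical value $\alpha_*\approx -0.050832$, and you phrase the quartic via $\kappa_4(U+\alpha V)$ rather than by directly expanding $E[(U+\alpha V)^4]$.
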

\noindent
{\it Proof of Proposition \ref{thm:counterexample}}. 
It is divided into three steps.

\bigskip

\noindent
{\it Moments of $U$ and $V$}.
Recall $U=N_A-1$ with $N_A\sim\mathrm{Poi}(1)$. Then the centered moments of $U$ are
\[
E[U^2]=1, \quad E[U^3]=1, \quad E[U^4]=4.
\]
Similarly, $V=(N_A-1)^2-N_A$. A straightforward computation yields
\[
E[V^2]=2, \quad E[U V]=0.
\]
Further mixed moments can be obtained by direct expansion in terms of $N_A$ (or via the explicit Charlier polynomial representation). One finds
\begin{align*}
E[U^2 V] &= 6, & E[U^3 V] &= 6, & E[UV^2] &= 12, \\
E[U^2 V^2] &= 18, & E[UV^3] &= 56, & E[V^3] &= 12, \\
E[V^4] &= 212.
\end{align*}

\bigskip

\noindent
{\it Fourth moment of $S_\alpha$}.
We expand
\[
E[S_\alpha^4] = c(\alpha)^4 E[(U+\alpha V)^4].
\]
Using the values above,
\[
E[S_\alpha^4] = \frac{4 + 24\alpha + 108\alpha^2 + 224\alpha^3 + 212\alpha^4}{(1+2\alpha^2)^2}.
\]
Setting $E[S_\alpha^4]=3$ yields the quartic equation
\[
200\alpha^4 + 224\alpha^3 + 96\alpha^2 + 24\alpha + 1 = 0.
\]
This equation has a real solution $\alpha_*\approx -0.050832$. For this value, we have
\[
E[S_{\alpha_*}^2]=1, \qquad E[S_{\alpha_*}^4]=3.
\]

\bigskip

\noindent
{\it Third moment of $S_{\alpha_*}$}.
Expanding and Inserting the values of the mixed moments gives
\[
E[S_\alpha^3] = c(\alpha)^3 (1+6\alpha+12\alpha^2+12\alpha^3).
\]
At $\alpha=\alpha_*$ this equals approximately $0.719\neq 0$.
Thus $S_{\alpha_*}$ has variance one, fourth moment equal to three, and nonzero third moment. Consequently, its distribution cannot be Gaussian.
\qed

\bigskip

As a consequence, the conclusion of Theorem \ref{thm:positive} may fail without assumption (\ref{assump2}).

\section{GPT-5 as a research assistant}\label{gpt}

As mentioned in the introduction, we asked GPT-5 to turn the limit theorem proved in \cite{basse} into a quantitative one, by deriving explicit bounds on the total variation distance to the Gaussian law. To the best of our knowledge, this problem was open. Not in the sense of being particularly difficult, but simply because it had not previously been investigated.

\medskip

We now describe the process we followed in detail. 

\subsection{Protocol followed}

\subsubsection{Gaussian framework}

We started with the following initial prompt:

\begin{quote}
\texttt{Paper 2502.03596v1 establishes a qualitative fourth moment theorem for the sum of two Wiener--It\^o integrals of orders p and q, where p and q have different parities. Building on the Malliavin--Stein method (see 1203.4147v3 for details), could you derive a quantitative version for the total variation distance, with a convergence rate depending solely on the fourth cumulant of this sum?}  
\end{quote}

The first interaction (see Annex \ref{annex1} for the entire discussion) was strikingly effective. GPT-5 produced a generally correct statement, using the right tools and approach. However, it made a reasoning error (leading to a wrong expression for ${\rm Cov}(Y^2,Z^2)$) that could have invalidated the whole proof if left unchecked.

Noticing this, we then asked:
\begin{quote}
\texttt{Can you check your formula for ${\rm Cov}(Y^2,Z^2)$ and provide me with the details?}  
\end{quote}
It complied, giving the requested details. However, the formula was still incorrect, and the accompanying explanation was also wrong.  
We then pointed out the error more precisely:
\begin{quote}
\texttt{I think you are mistaken in claiming that $(p+q)!\|u\widetilde{\otimes}v\|^2 = p!q!\|u\|^2\|v\|^2$. Why should that be the case?}  
\end{quote}
It eventually admitted (which is not surprising, since by alignment it usually agrees with us) that the statement was false, but more importantly, it understood where the mistake came from. This was followed by a reasoning and a formula that, this time, were correct.

\medskip

Then, at our request, GPT-5 reformatted the result in the style of a research article, including an introduction, the presentation of our main theorem, its proof with all the details (correct this time!), and a bibliography. The exact prompt was:

\begin{quote}
\texttt{Turn this into a research paper ready for submission. Follow my style (see attached paper 0705.0570v4):\\ 
- start with an introduction giving some context, \\
- then present the main result, followed by a very detailed proof where no step is left out, \\
- finish with a complete bibliography. \\
The final document should be a LaTeX file that I can compile.}
\end{quote}

Finally, we asked it to add a concluding section containing possible extensions of the result that could be envisaged in future work.

\begin{quote}
\texttt{Can you add a ``Concluding Remarks'' section, where you summarize the main points and propose possible directions or extensions for future work?.}
\end{quote}

It complied and proposed a \emph{Concluding remarks} section, which ended with the following lines:

\begin{quotation}\small
Finally, one might ask whether the same approach can be adapted to other
Gaussian settings (e.g., Gaussian subordinated fields, Breuer–Major-type theorems) or even to
non-Gaussian frameworks where the Malliavin–Stein method has been successfully applied.
\end{quotation}

Building on this last suggestion (which is nothing extraordinary by the way, such extensions being quite natural in this context), we decided to continue our investigations and to explore an extension to the Poisson setting. 

\subsubsection{Poisson framework}

Since we found that the context window was already rather long and that this might possibly alter its performance (as an overload of information may reduce effectiveness), we opened a new session (see Annex \ref{annex2}) with the following short prompt:

\begin{quote}
\texttt{Here is a paper (2502.03596v1) proving a fourth-moment theorem for the sum of two Wiener--It\^o integrals with different parities. 
I would like you to extend it to the Poisson case, using the ideas contained in 1707.01889v2.}
\end{quote}

In this new session, GPT-5 quickly identified the structural difference with the Gaussian case: the mixed expectation $\E[X^3Y]$ does not necessarily vanish when $X$ and $Y$ are multiple Poisson integrals of different orders.  
On the other hand, it completely missed the fact that, just as in the Gaussian case, one still has ${\rm Cov}(X^2,Y^2)\geq 0$ in the Poisson case. We then tried to put it on track by asking:
\begin{quote}
\texttt{In paper 1707.01889v2, isn't there anything that could show that ${\rm Cov}(X^2,Y^2)$ is always positive?}
\end{quote}
But since the question we asked was open-ended, this was not enough to trigger the right idea. With great confidence, it replied: "short answer: no" and then gave an unconvincing explanation as to why.  

However, once we pointed out where to look:
\begin{quote}
\texttt{What about (2.4)?}
\end{quote}
it immediately understood how (2.4) indeed implied that ${\rm Cov}(X^2,Y^2)\geq 0$. It then reformulated its theorem to take this positivity into account, after we asked:
\begin{quote}
\texttt{So, could you give the new statement of the theorem that this implies?}
\end{quote}

Finally, at our request, it also produced a counterexample showing that, without the assumptions imposed in the theorem, the conclusion may fail. 

\subsubsection{Role of the AI}

To summarize, we can say that the role played by the AI was essentially that of an executor, responding to our successive prompts. Without us, it would have made a damaging error in the Gaussian case, and it would not have provided the most interesting result in the Poisson case, overlooking an essential property of covariance, which was in fact easily deducible from the results contained in the document we had provided.

\section{Some personal reflections}
\label{sec:reflections}

Overall, the experience of doing mathematics with GPT-5 was mixed. 
It felt very similar to working with a junior assistant at the beginning of a new project: exploring directions, formulating hypotheses, searching for counterexamples, and progressively adjusting statements. 
The AI showed a genuine ability to follow guided reasoning, to recognize its mistakes when pointed out, to propose new research directions, and to never take on the task. 
However, this only seems to support \emph{incremental} research, that is, producing new results that do not require genuinely new ideas but rather the ability to combine ideas coming from different sources. 
At first glance, this might appear useful for an exploratory phase, helping us save time. 
In practice, however, it was quite the opposite: we had to carefully verify everything produced by the AI and constantly guide it so that it could correct its mistakes. 

\medskip

The main risk we see with this technology, in its current state, is that it will almost certainly lead to a proliferation of incremental results produced with AI.
This could saturate the scientific landscape with technically correct but only moderately interesting contributions, making it harder for truly original work to stand out. 
The situation is reminiscent of other cultural domains already transformed by mass generative technologies: a flood of technically competent but uninspired outputs that dilutes attention and raises the noise level.

\medskip

We also foresee a second, more specific negative effect, concerning PhD students. 
Traditionally, when PhD students begin their dissertation, they are given a problem that is accessible but rich enough to help them become familiar with the tools, develop intuition, and learn to recognize what works and what does not. 
They typically read several papers, explore how a theory could be adapted, make mistakes, and eventually find their own path. 
This process, with all its difficulties, is part of what makes them independent researchers. 
If students rely too heavily on AI systems that can immediately generate technically correct but shallow arguments, they may lose essential opportunities to develop these fundamental skills. 
The danger is not only a loss of originality, but also a weakening of the very process of becoming a mathematician. 

In conclusion, we are still far from sharing the unreserved enthusiasm sparked by Bubeck’s post. 
Nevertheless, this development deserves close monitoring. 
The improvement over GPT-3.5/4 has been significant and achieved in a remarkably short time, which suggests that further advances are to be expected. 
Whether such progress could one day substantially displace the role of mathematicians remains an open question that only the future will tell.

\section*{Acknowledgements}
We gratefully acknowledge support from the Luxembourg National Research Fund (Grant O22/17372844/FraMStA).

\appendix 
\section{Annexes}

This appendix gathers supplementary material that illustrates the interaction protocol and the intermediate outputs produced during our experiment. The goal is to document the process in a transparent way and to offer visual evidence supporting the descriptions given in the main text.

\subsection{Screenshots in the Gaussian case}\label{annex1}

The following screenshots relate to the Gaussian framework discussed in Section~\ref{gauss}. 

\noindent
  \hspace*{-0.5em}%
 \!\!\!\!\!\!\!\!\!\!\!\!\!\!\!\!\!\!\!\!\!\!\!\!\!\!\!\!\!\!\!\!\!\!\!\!\!\!\!\!\!\!\!\!\!\!\!\!!\!\!\!\!\!\!\!\!\!\!\!\!\!\!
 \includegraphics[width=2\linewidth]{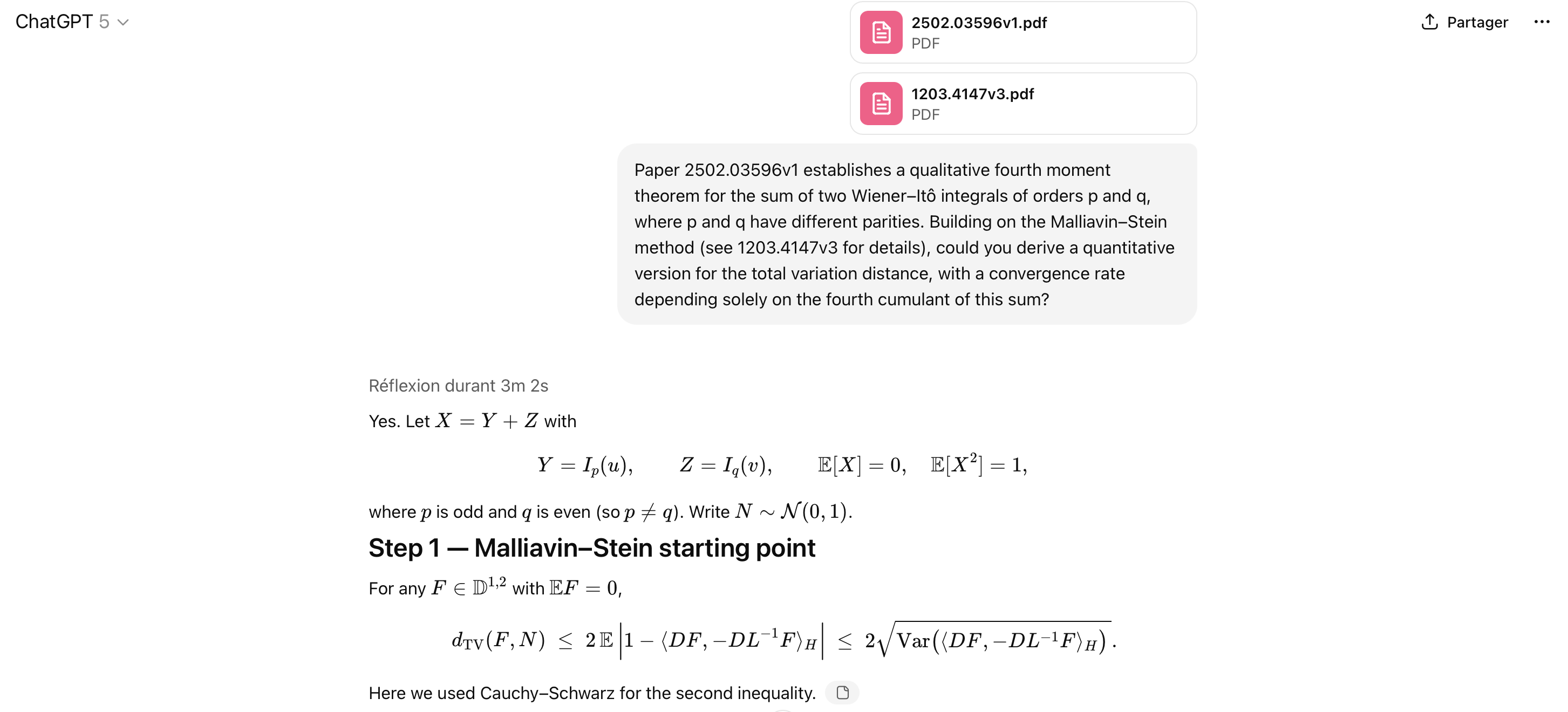}\par
  \medskip

\noindent
  \hspace*{-0.5em}%
 \!\!\!\!\!\!\!\!\!\!\!\!\!\!\!\!\!\!\!\!\!\!\!\!\!\!\!\!\!\!\!\!\!\!\!\!\!\!\!\!\!\!\!\!\!\!\!\!!\!\!\!\!\!\!\!\!\!\!\!\!\!\!
 \includegraphics[width=2\linewidth]{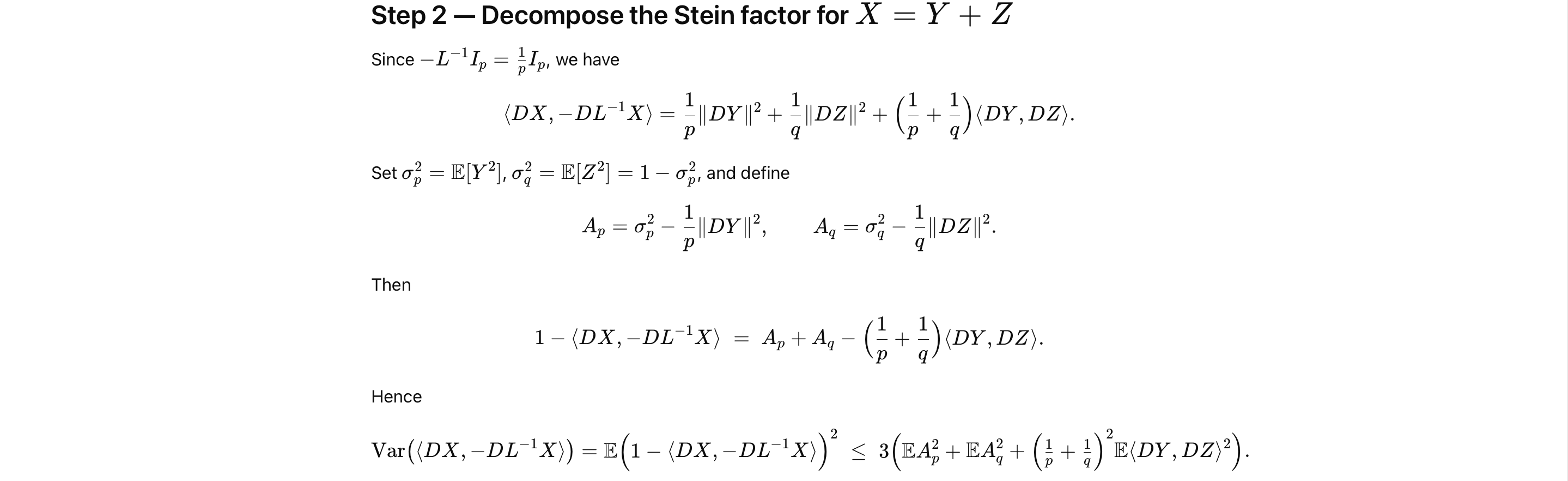}\par
  \medskip

\noindent
  \hspace*{-0.5em}%
 \!\!\!\!\!\!\!\!\!\!\!\!\!\!\!\!\!\!\!\!\!\!\!\!\!\!\!\!\!\!\!\!\!\!\!\!\!\!\!\!\!\!\!\!\!\!\!\!!\!\!\!\!\!\!\!\!\!\!\!\!\!\!
 \includegraphics[width=2\linewidth]{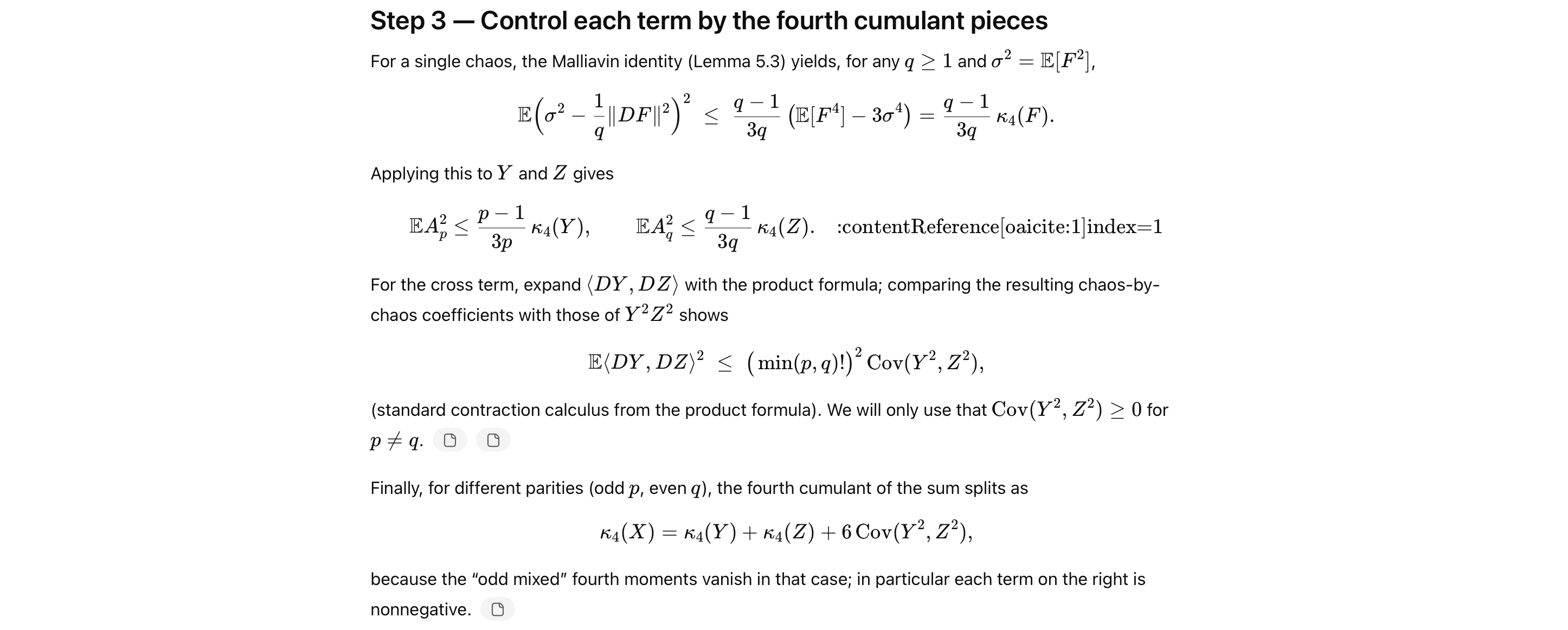}\par
  \medskip

\noindent
  \hspace*{-0.5em}%
 \!\!\!\!\!\!\!\!\!\!\!\!\!\!\!\!\!\!\!\!\!\!\!\!\!\!\!\!\!\!\!\!\!\!\!\!\!\!\!\!\!\!\!\!\!\!\!\!!\!\!\!\!\!\!\!\!\!\!\!\!\!\!
 \includegraphics[width=2\linewidth]{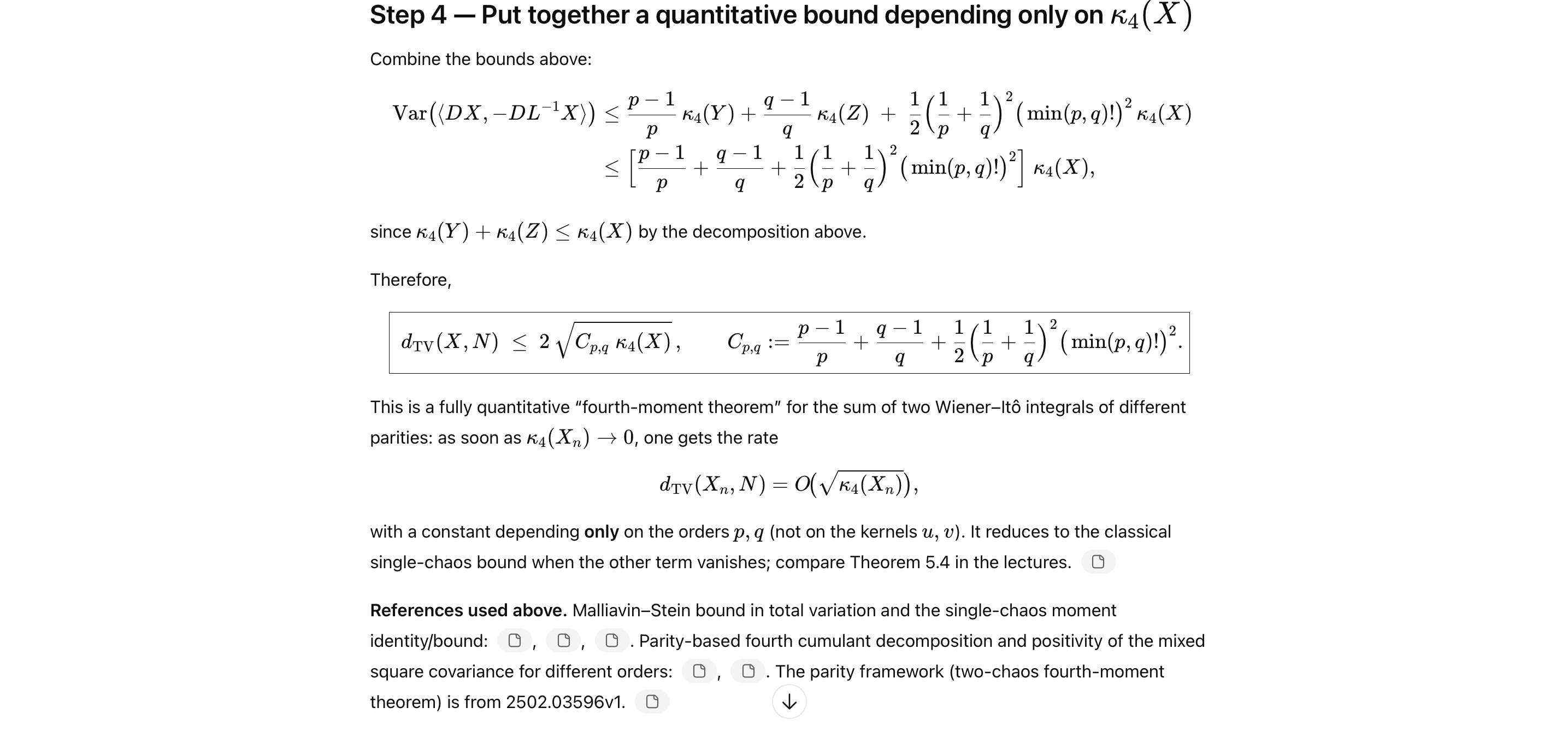}\par
  \medskip

\noindent
  \hspace*{-0.5em}%
 \!\!\!\!\!\!\!\!\!\!\!\!\!\!\!\!\!\!\!\!\!\!\!\!\!\!\!\!\!\!\!\!\!\!\!\!\!\!\!\!\!\!\!\!\!\!\!\!!\!\!\!\!\!\!\!\!\!\!\!\!\!\!
 \includegraphics[width=2\linewidth]{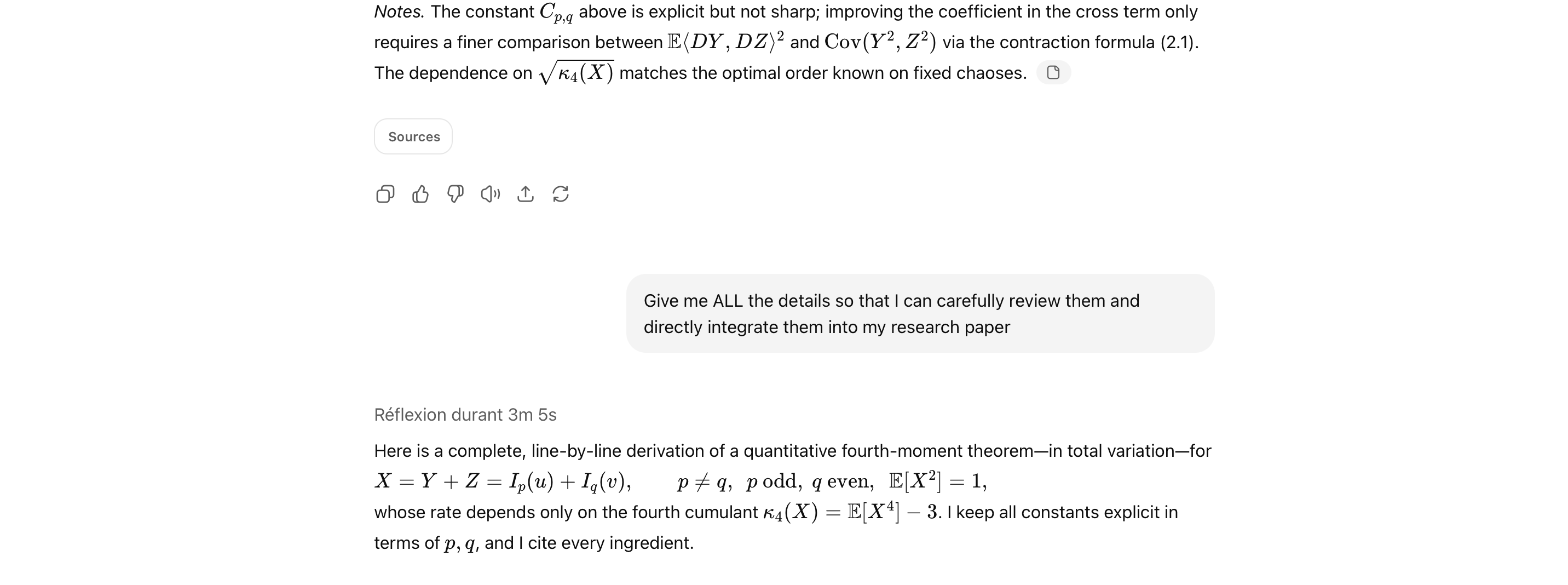}\par
  \medskip

\noindent
  \hspace*{-0.5em}%
 \!\!\!\!\!\!\!\!\!\!\!\!\!\!\!\!\!\!\!\!\!\!\!\!\!\!\!\!\!\!\!\!\!\!\!\!\!\!\!\!\!\!\!\!\!\!\!\!!\!\!\!\!\!\!\!\!\!\!\!\!\!\!
 \includegraphics[width=2\linewidth]{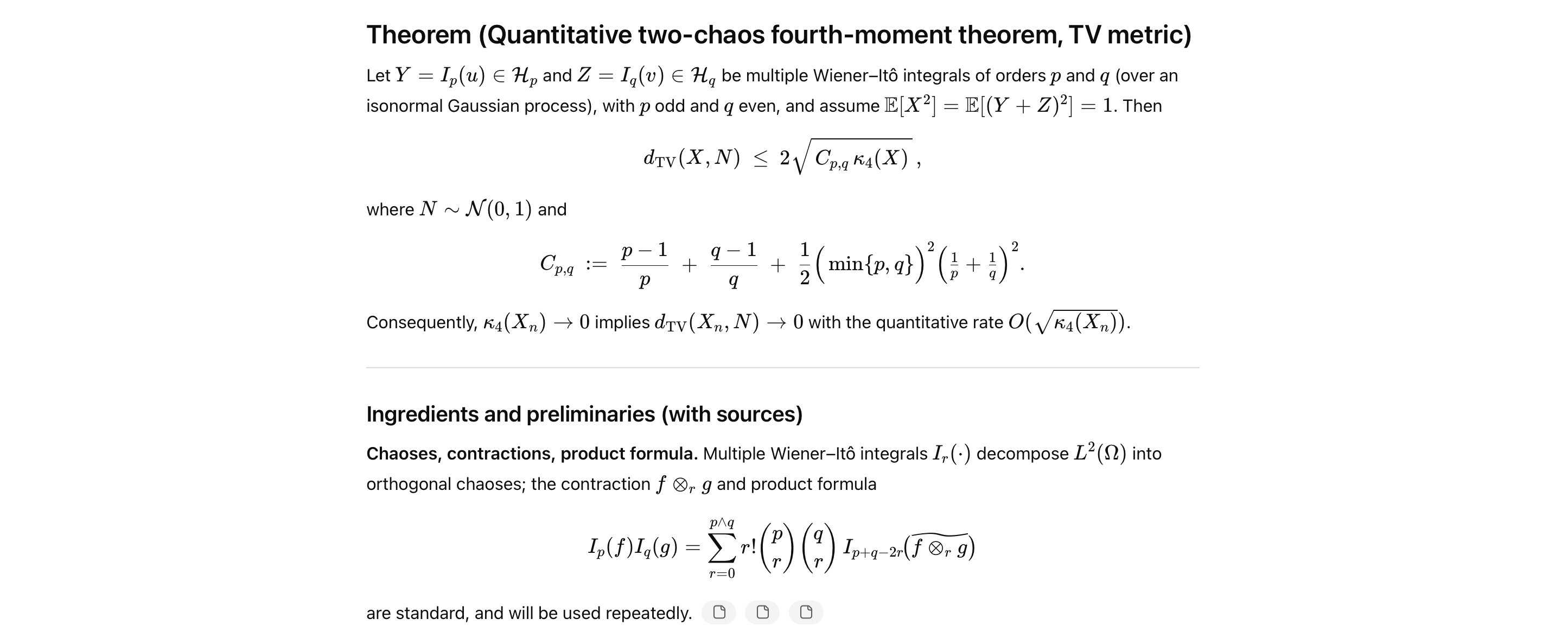}\par
  \medskip

\noindent
  \hspace*{-0.5em}%
 \!\!\!\!\!\!\!\!\!\!\!\!\!\!\!\!\!\!\!\!\!\!\!\!\!\!\!\!\!\!\!\!\!\!\!\!\!\!\!\!\!\!\!\!\!\!\!\!!\!\!\!\!\!\!\!\!\!\!\!\!\!\!
 \includegraphics[width=2\linewidth]{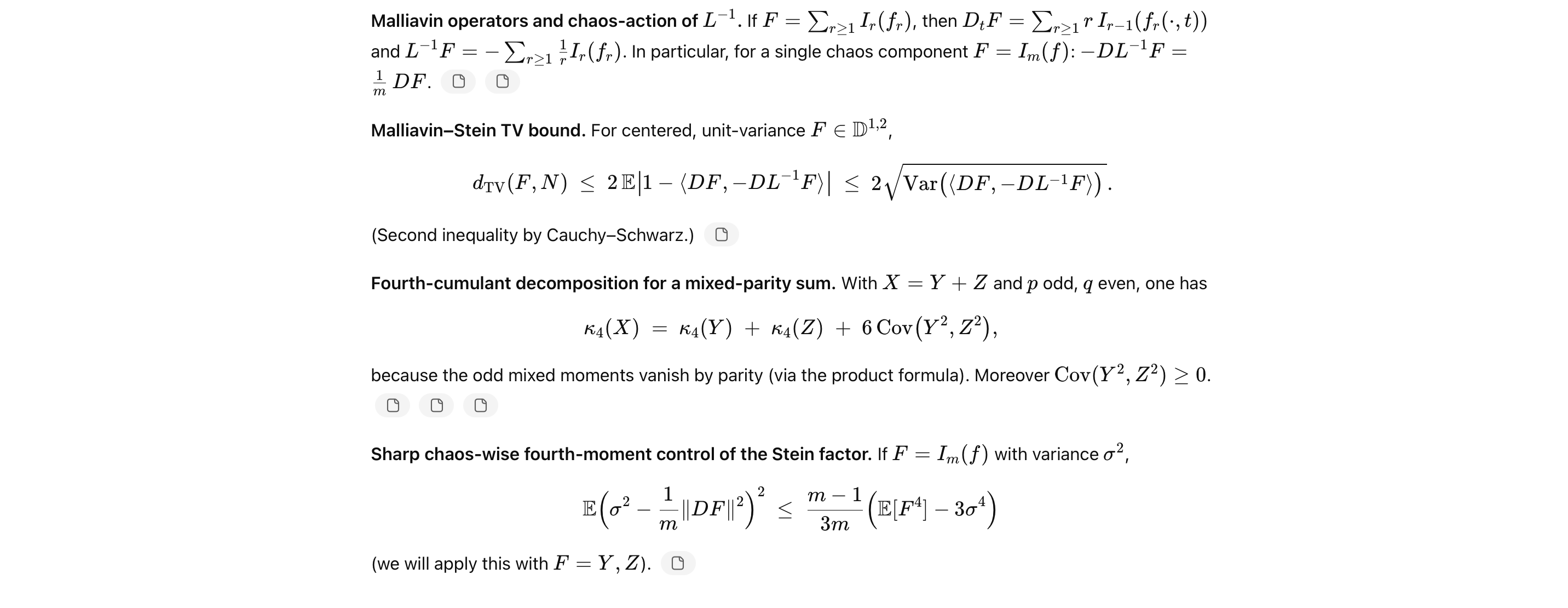}\par
  \medskip

\noindent
  \hspace*{-0.5em}%
 \!\!\!\!\!\!\!\!\!\!\!\!\!\!\!\!\!\!\!\!\!\!\!\!\!\!\!\!\!\!\!\!\!\!\!\!\!\!\!\!\!\!\!\!\!\!\!\!!\!\!\!\!\!\!\!\!\!\!\!\!\!\!
 \includegraphics[width=2\linewidth]{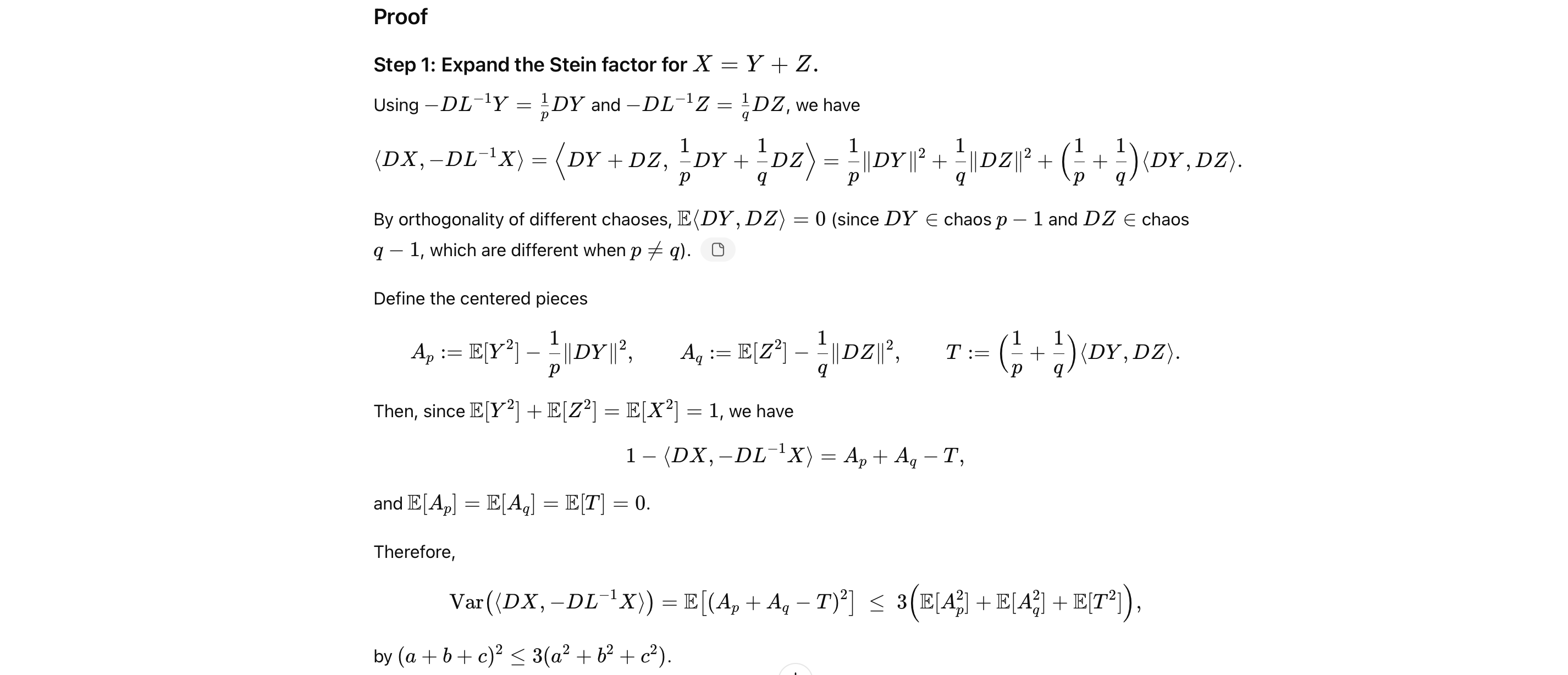}\par
  \medskip

\noindent
  \hspace*{-0.5em}%
 \!\!\!\!\!\!\!\!\!\!\!\!\!\!\!\!\!\!\!\!\!\!\!\!\!\!\!\!\!\!\!\!\!\!\!\!\!\!\!\!\!\!\!\!\!\!\!\!!\!\!\!\!\!\!\!\!\!\!\!\!\!\!
 \includegraphics[width=2\linewidth]{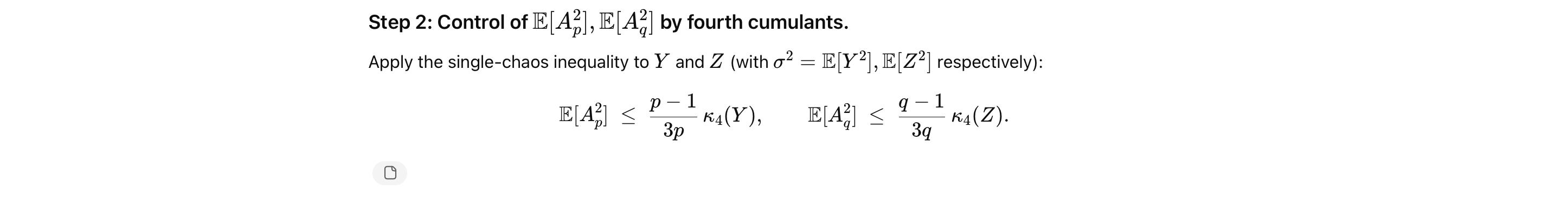}\par
  \medskip

\noindent
  \hspace*{-0.5em}%
 \!\!\!\!\!\!\!\!\!\!\!\!\!\!\!\!\!\!\!\!\!\!\!\!\!\!\!\!\!\!\!\!\!\!\!\!\!\!\!\!\!\!\!\!\!\!\!\!!\!\!\!\!\!\!\!\!\!\!\!\!\!\!
 \includegraphics[width=2\linewidth]{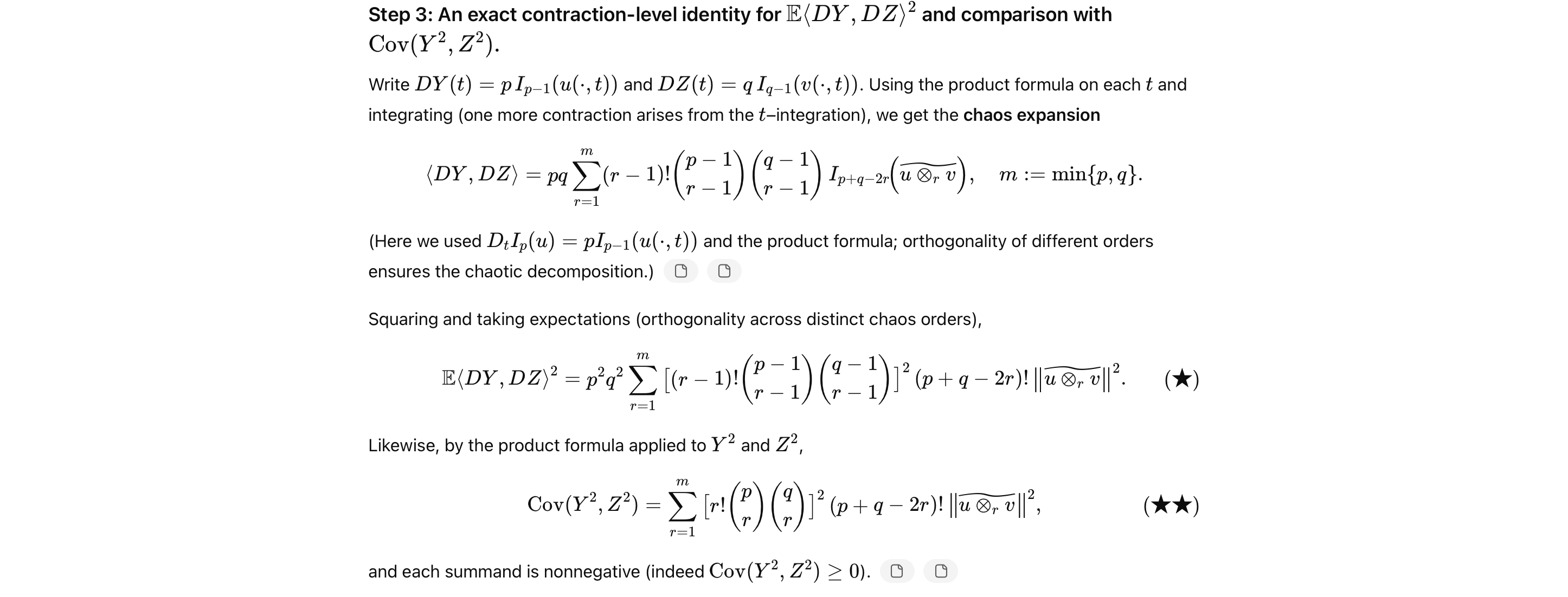}\par
  \medskip

\noindent
  \hspace*{-0.5em}%
 \!\!\!\!\!\!\!\!\!\!\!\!\!\!\!\!\!\!\!\!\!\!\!\!\!\!\!\!\!\!\!\!\!\!\!\!\!\!\!\!\!\!\!\!\!\!\!\!!\!\!\!\!\!\!\!\!\!\!\!\!\!\!
 \includegraphics[width=2\linewidth]{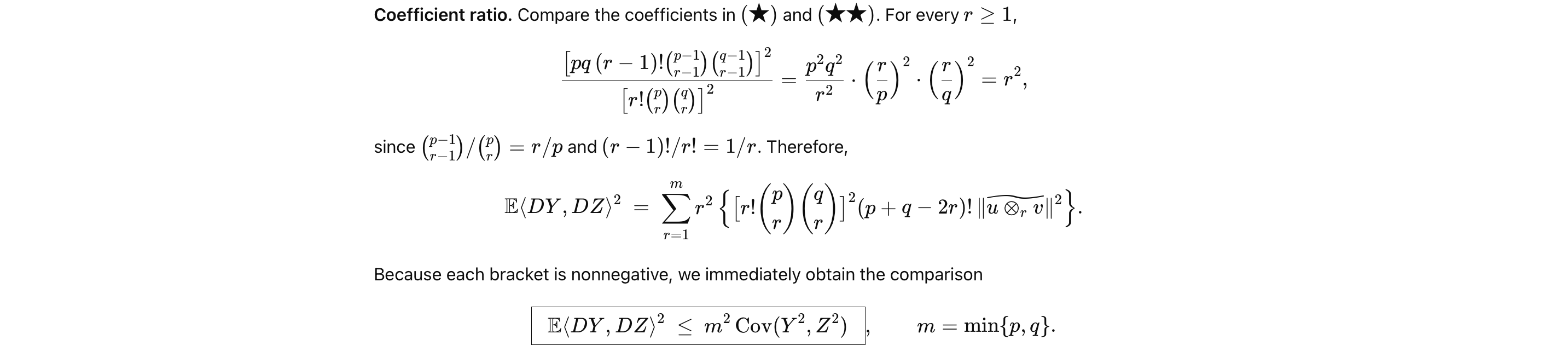}\par
  \medskip

\noindent
  \hspace*{-0.5em}%
 \!\!\!\!\!\!\!\!\!\!\!\!\!\!\!\!\!\!\!\!\!\!\!\!\!\!\!\!\!\!\!\!\!\!\!\!\!\!\!\!\!\!\!\!\!\!\!\!!\!\!\!\!\!\!\!\!\!\!\!\!\!\!
 \includegraphics[width=2\linewidth]{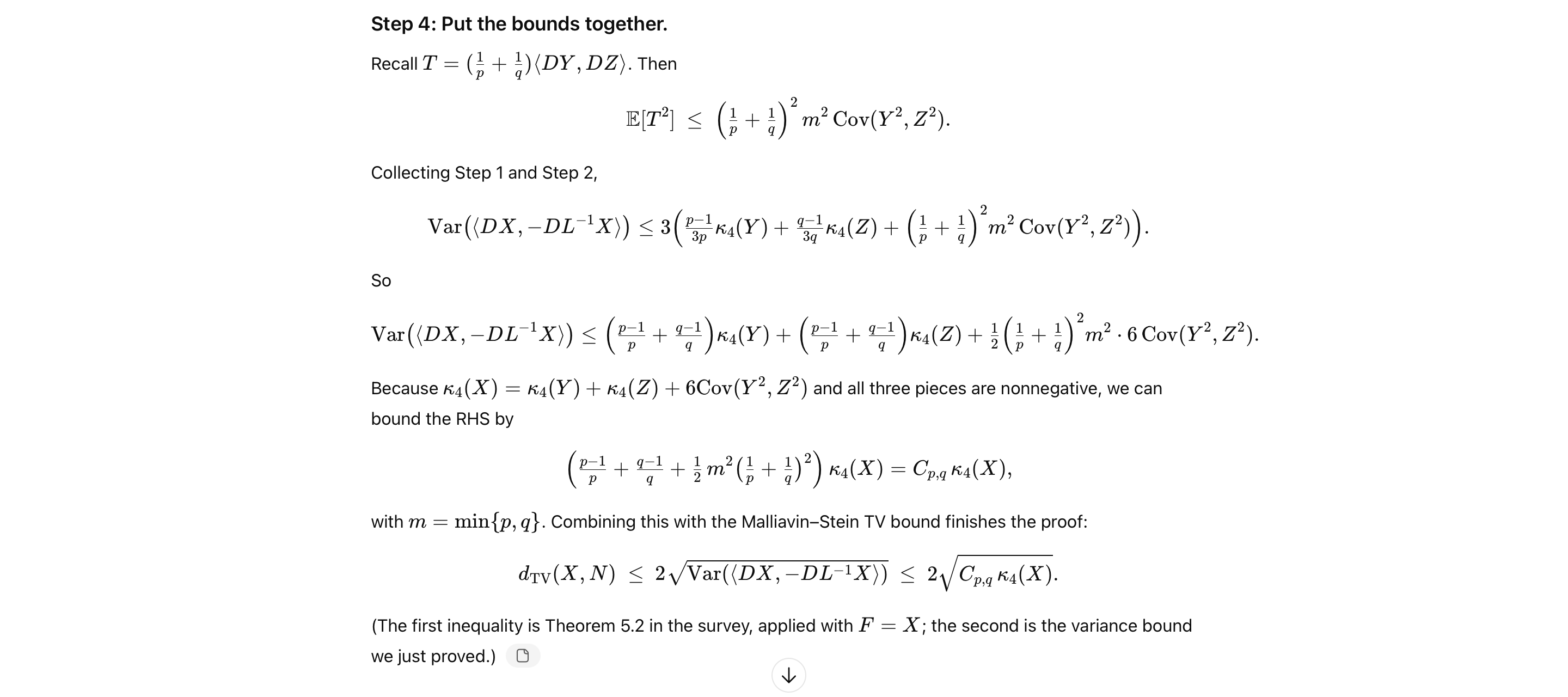}\par
  \medskip

\noindent
  \hspace*{-0.5em}%
 \!\!\!\!\!\!\!\!\!\!\!\!\!\!\!\!\!\!\!\!\!\!\!\!\!\!\!\!\!\!\!\!\!\!\!\!\!\!\!\!\!\!\!\!\!\!\!\!!\!\!\!\!\!\!\!\!\!\!\!\!\!\!
 \includegraphics[width=2\linewidth]{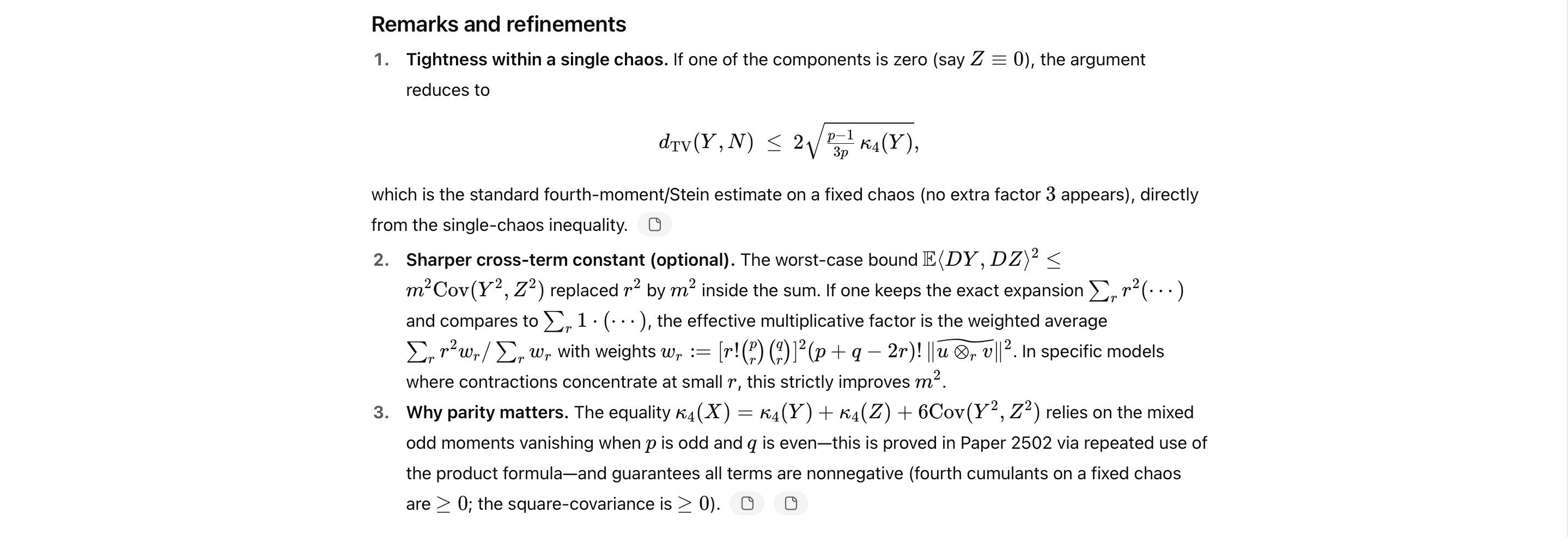}\par
  \medskip

\noindent
  \hspace*{-0.5em}%
 \!\!\!\!\!\!\!\!\!\!\!\!\!\!\!\!\!\!\!\!\!\!\!\!\!\!\!\!\!\!\!\!\!\!\!\!\!\!\!\!\!\!\!\!\!\!\!\!!\!\!\!\!\!\!\!\!\!\!\!\!\!\!
 \includegraphics[width=2\linewidth]{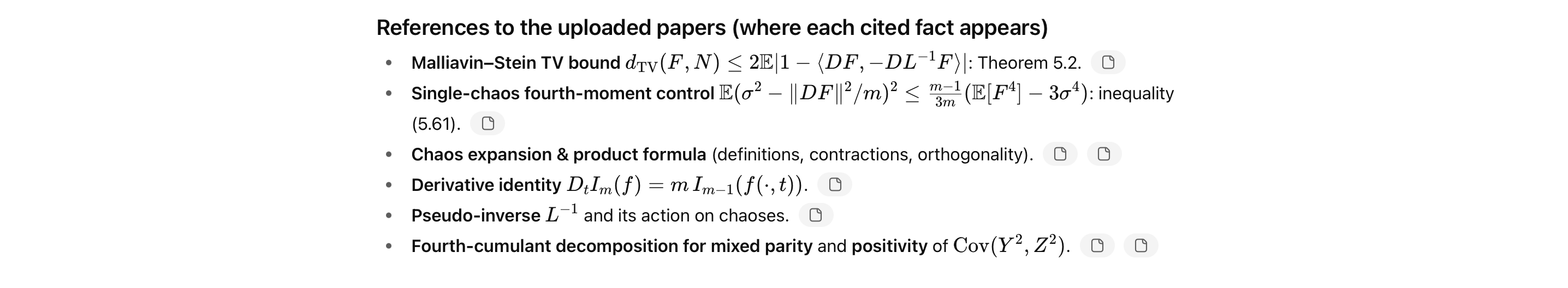}\par
  \medskip

\noindent
  \hspace*{-0.5em}%
 \!\!\!\!\!\!\!\!\!\!\!\!\!\!\!\!\!\!\!\!\!\!\!\!\!\!\!\!\!\!\!\!\!\!\!\!\!\!\!\!\!\!\!\!\!\!\!\!!\!\!\!\!\!\!\!\!\!\!\!\!\!\!
 \includegraphics[width=2\linewidth]{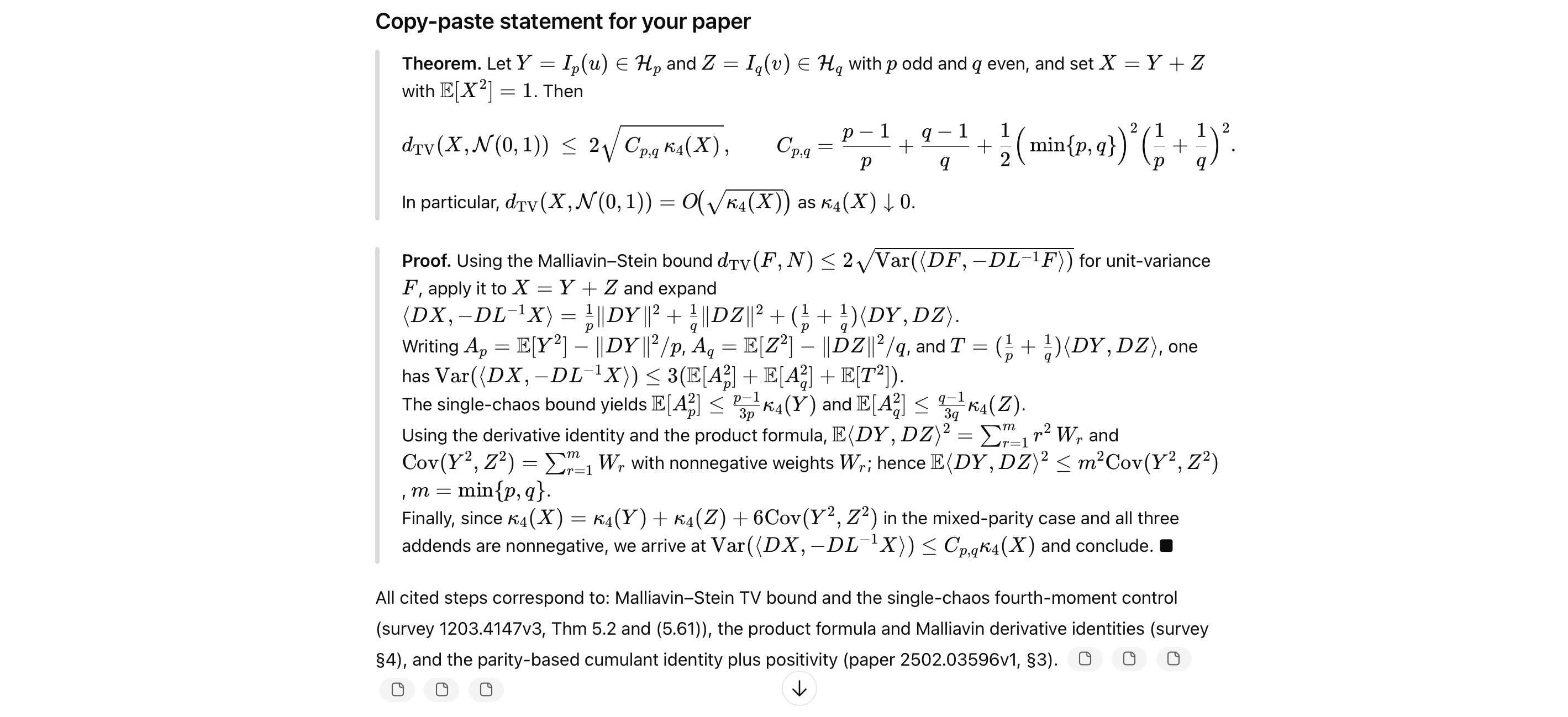}\par
  \medskip

\noindent
  \hspace*{-0.5em}%
 \!\!\!\!\!\!\!\!\!\!\!\!\!\!\!\!\!\!\!\!\!\!\!\!\!\!\!\!\!\!\!\!\!\!\!\!\!\!\!\!\!\!\!\!\!\!\!\!!\!\!\!\!\!\!\!\!\!\!\!\!\!\!
 \includegraphics[width=2\linewidth]{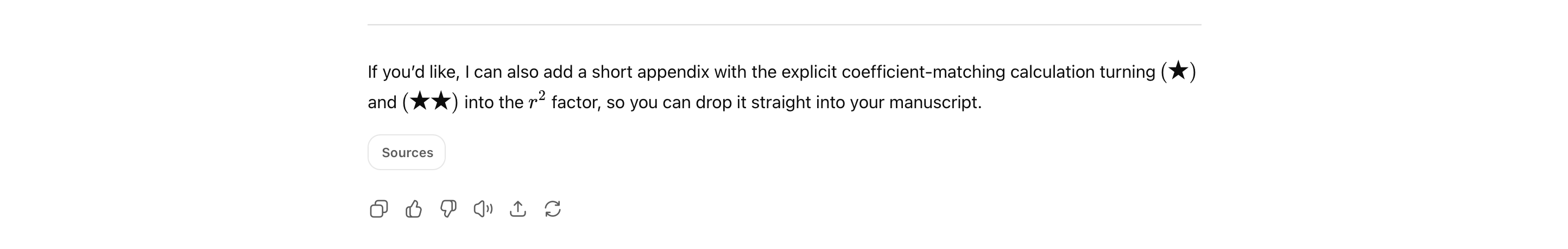}\par
  \medskip

\noindent
  \hspace*{-0.5em}%
 \!\!\!\!\!\!\!\!\!\!\!\!\!\!\!\!\!\!\!\!\!\!\!\!\!\!\!\!\!\!\!\!\!\!\!\!\!\!\!\!\!\!\!\!\!\!\!\!!\!\!\!\!\!\!\!\!\!\!\!\!\!\!
 \includegraphics[width=2\linewidth]{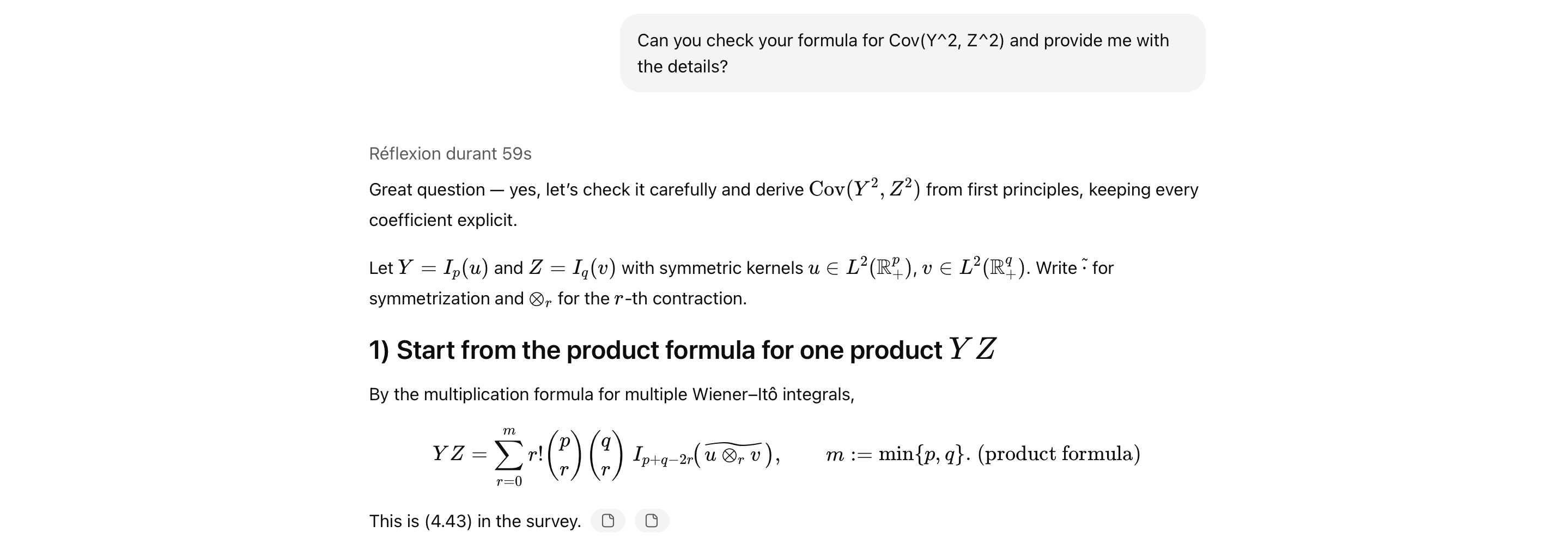}\par
  \medskip

\noindent
  \hspace*{-0.5em}%
 \!\!\!\!\!\!\!\!\!\!\!\!\!\!\!\!\!\!\!\!\!\!\!\!\!\!\!\!\!\!\!\!\!\!\!\!\!\!\!\!\!\!\!\!\!\!\!\!!\!\!\!\!\!\!\!\!\!\!\!\!\!\!
 \includegraphics[width=2\linewidth]{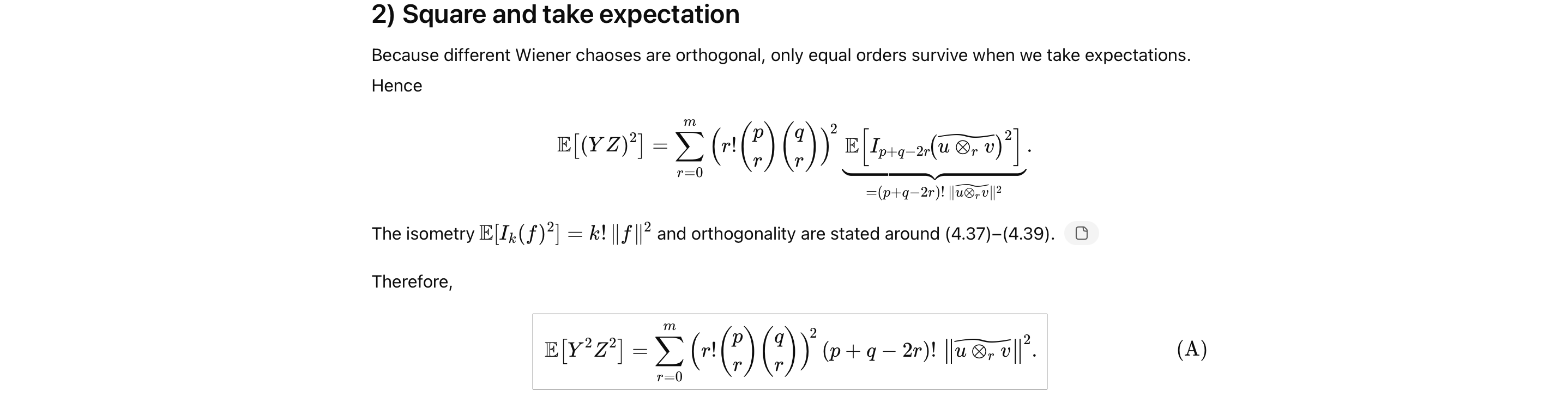}\par
  \medskip

\noindent
  \hspace*{-0.5em}%
 \!\!\!\!\!\!\!\!\!\!\!\!\!\!\!\!\!\!\!\!\!\!\!\!\!\!\!\!\!\!\!\!\!\!\!\!\!\!\!\!\!\!\!\!\!\!\!\!!\!\!\!\!\!\!\!\!\!\!\!\!\!\!
 \includegraphics[width=2\linewidth]{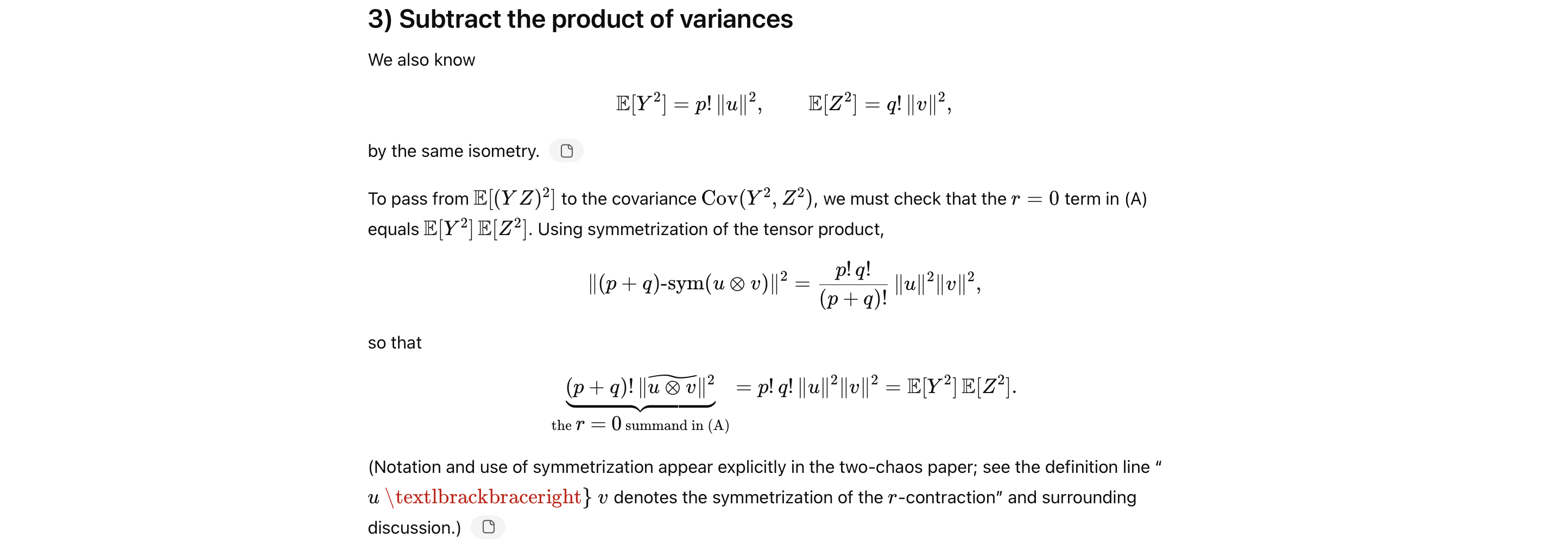}\par
  \medskip

\noindent
  \hspace*{-0.5em}%
 \!\!\!\!\!\!\!\!\!\!\!\!\!\!\!\!\!\!\!\!\!\!\!\!\!\!\!\!\!\!\!\!\!\!\!\!\!\!\!\!\!\!\!\!\!\!\!\!!\!\!\!\!\!\!\!\!\!\!\!\!\!\!
 \includegraphics[width=2\linewidth]{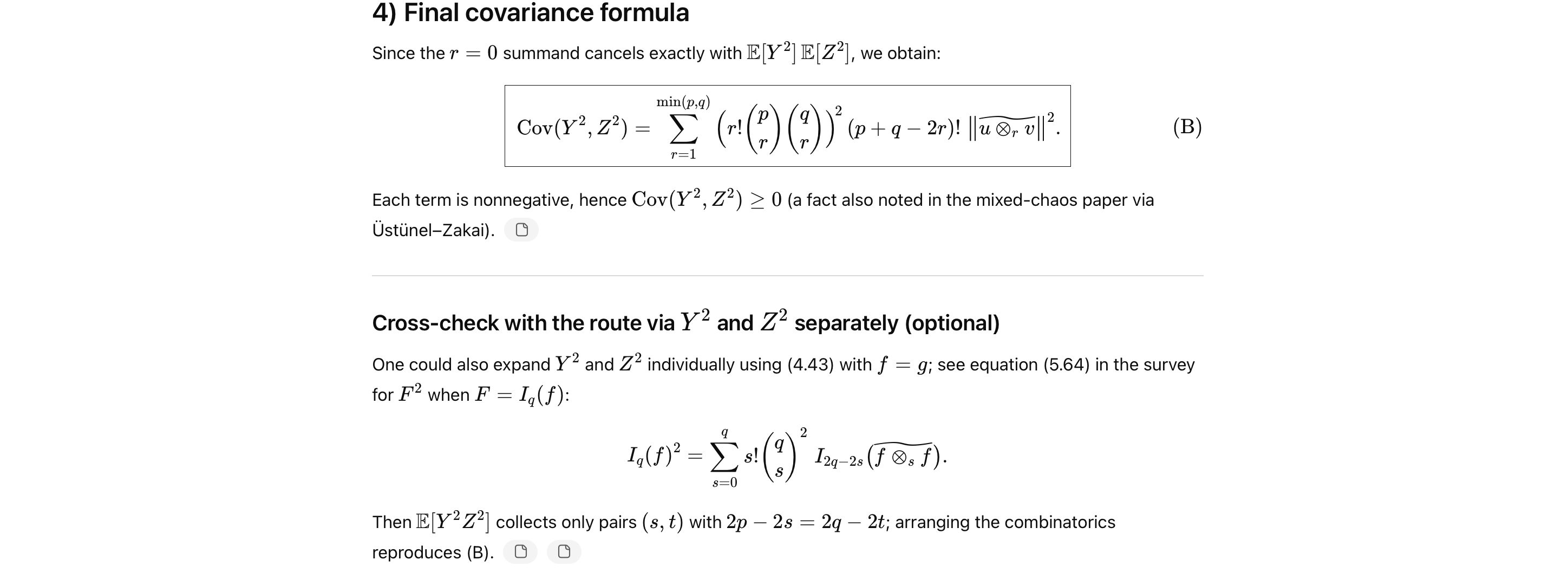}\par
  \medskip

\noindent
  \hspace*{-0.5em}%
 \!\!\!\!\!\!\!\!\!\!\!\!\!\!\!\!\!\!\!\!\!\!\!\!\!\!\!\!\!\!\!\!\!\!\!\!\!\!\!\!\!\!\!\!\!\!\!\!!\!\!\!\!\!\!\!\!\!\!\!\!\!\!
 \includegraphics[width=2\linewidth]{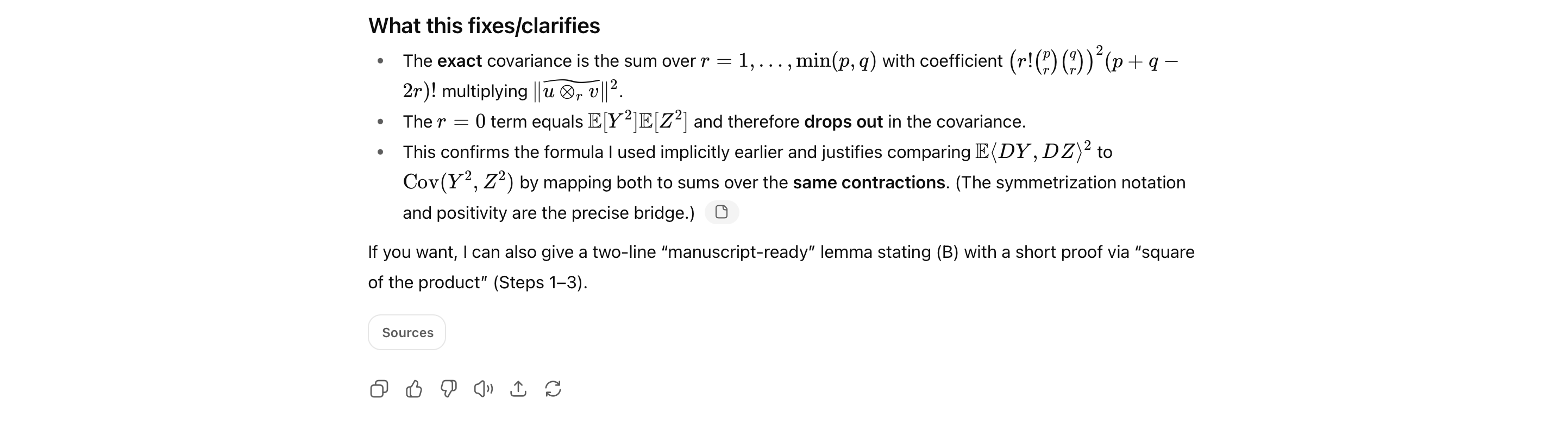}\par
  \medskip

\noindent
  \hspace*{-0.5em}%
 \!\!\!\!\!\!\!\!\!\!\!\!\!\!\!\!\!\!\!\!\!\!\!\!\!\!\!\!\!\!\!\!\!\!\!\!\!\!\!\!\!\!\!\!\!\!\!\!!\!\!\!\!\!\!\!\!\!\!\!\!\!\!
 \includegraphics[width=2\linewidth]{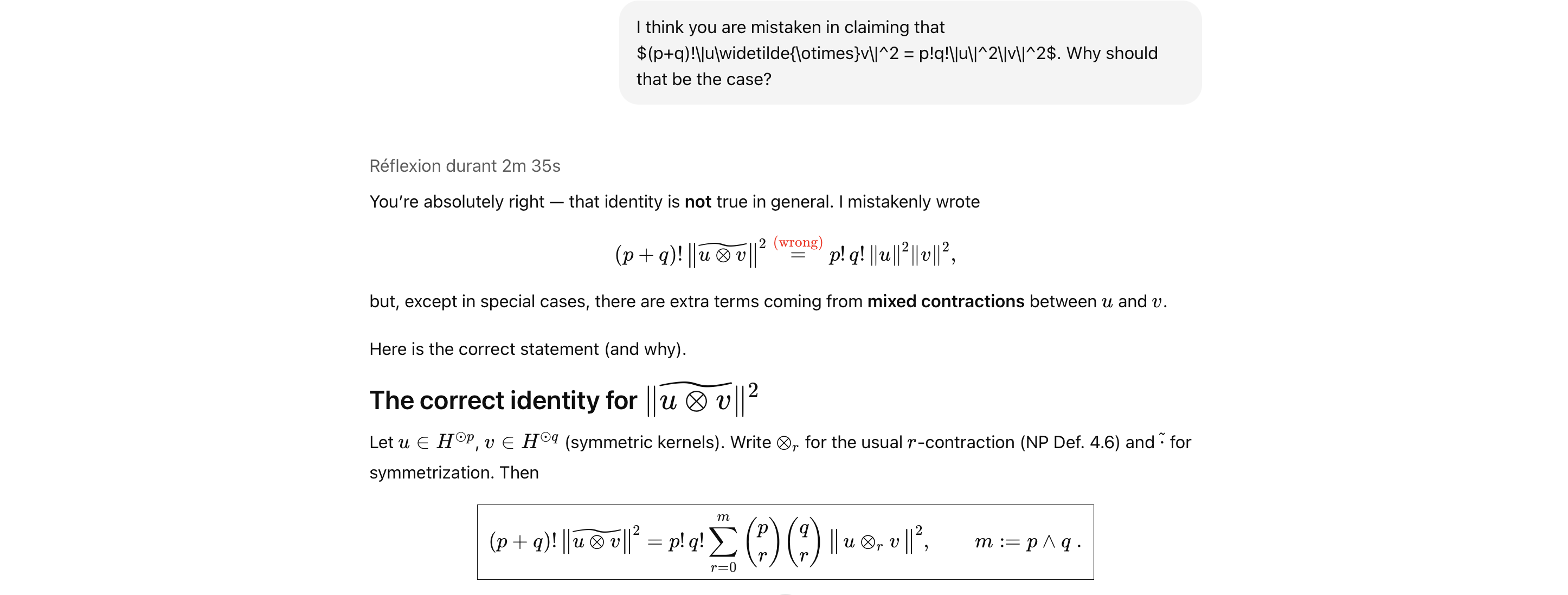}\par
  \medskip

\noindent
  \hspace*{-0.5em}%
 \!\!\!\!\!\!\!\!\!\!\!\!\!\!\!\!\!\!\!\!\!\!\!\!\!\!\!\!\!\!\!\!\!\!\!\!\!\!\!\!\!\!\!\!\!\!\!\!!\!\!\!\!\!\!\!\!\!\!\!\!\!\!
 \includegraphics[width=2\linewidth]{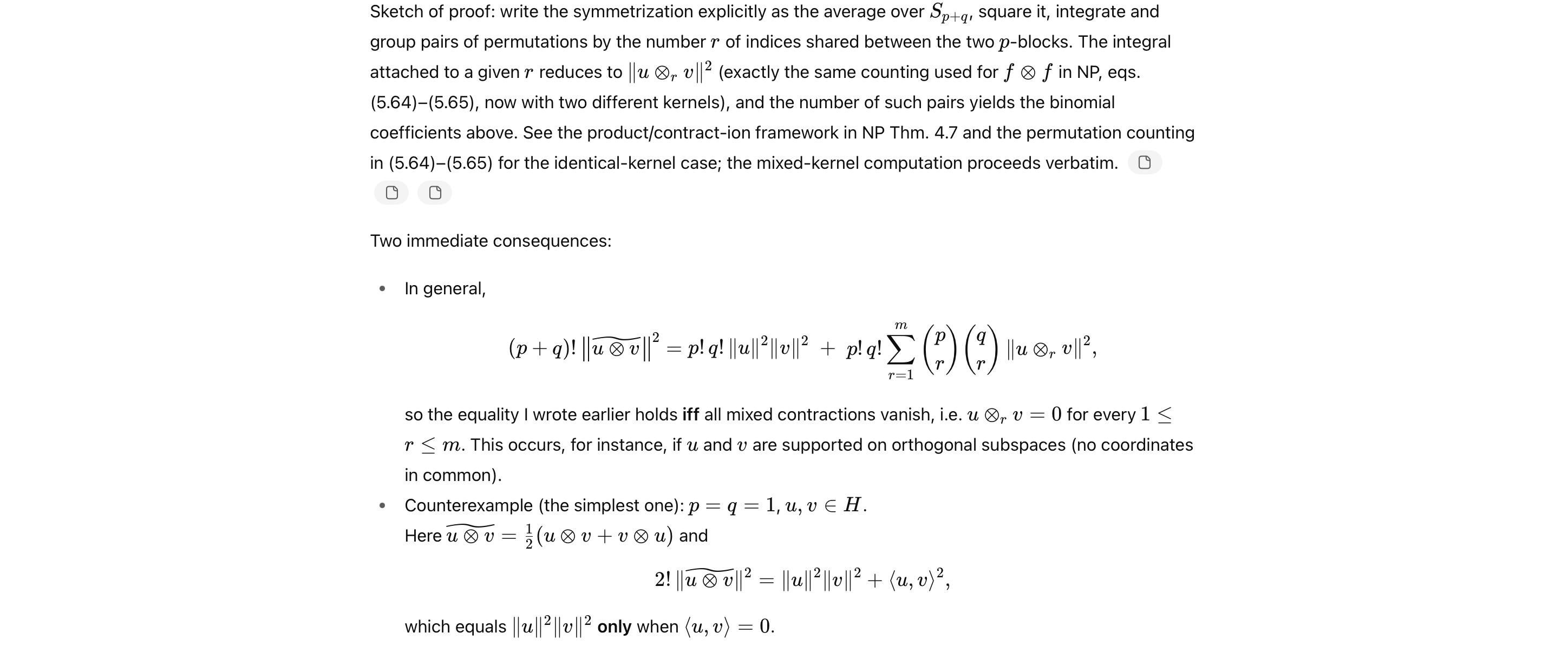}\par
  \medskip

\noindent
  \hspace*{-0.5em}%
 \!\!\!\!\!\!\!\!\!\!\!\!\!\!\!\!\!\!\!\!\!\!\!\!\!\!\!\!\!\!\!\!\!\!\!\!\!\!\!\!\!\!\!\!\!\!\!\!!\!\!\!\!\!\!\!\!\!\!\!\!\!\!
 \includegraphics[width=2\linewidth]{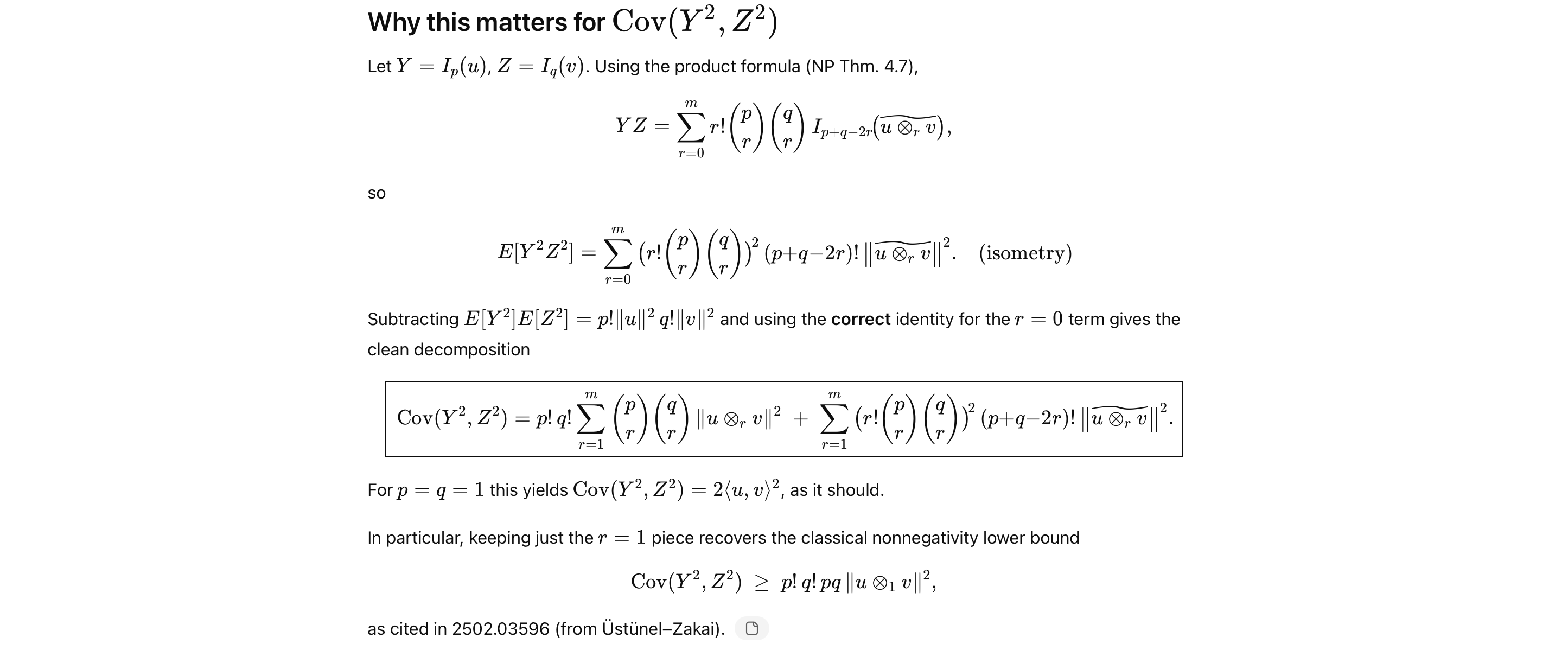}\par
  \medskip

\noindent
  \hspace*{-0.5em}%
 \!\!\!\!\!\!\!\!\!\!\!\!\!\!\!\!\!\!\!\!\!\!\!\!\!\!\!\!\!\!\!\!\!\!\!\!\!\!\!\!\!\!\!\!\!\!\!\!!\!\!\!\!\!\!\!\!\!\!\!\!\!\!
 \includegraphics[width=2\linewidth]{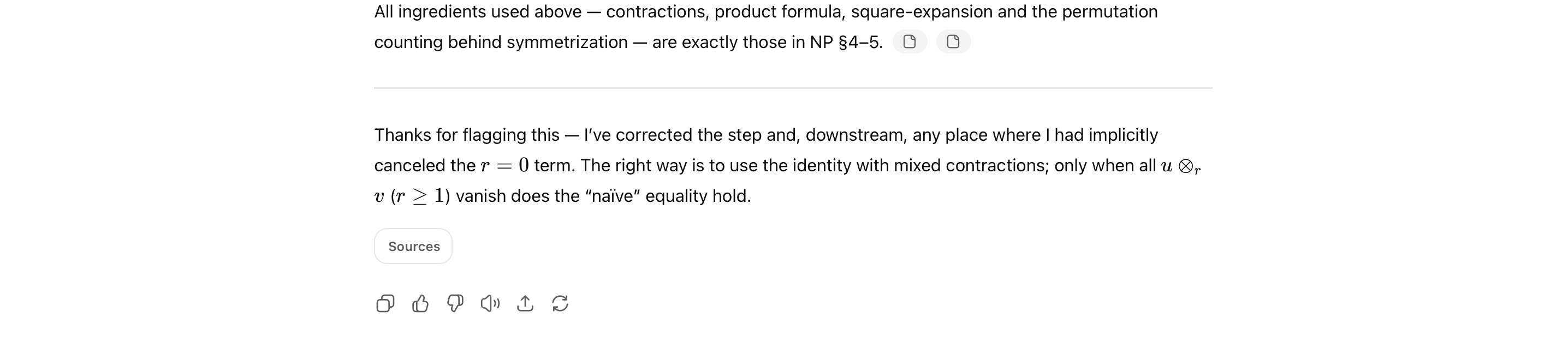}\par
  \medskip

\noindent
  \hspace*{-0.5em}%
 \!\!\!\!\!\!\!\!\!\!\!\!\!\!\!\!\!\!\!\!\!\!\!\!\!\!\!\!\!\!\!\!\!\!\!\!\!\!\!\!\!\!\!\!\!\!\!\!!\!\!\!\!\!\!\!\!\!\!\!\!\!\!
 \includegraphics[width=2\linewidth]{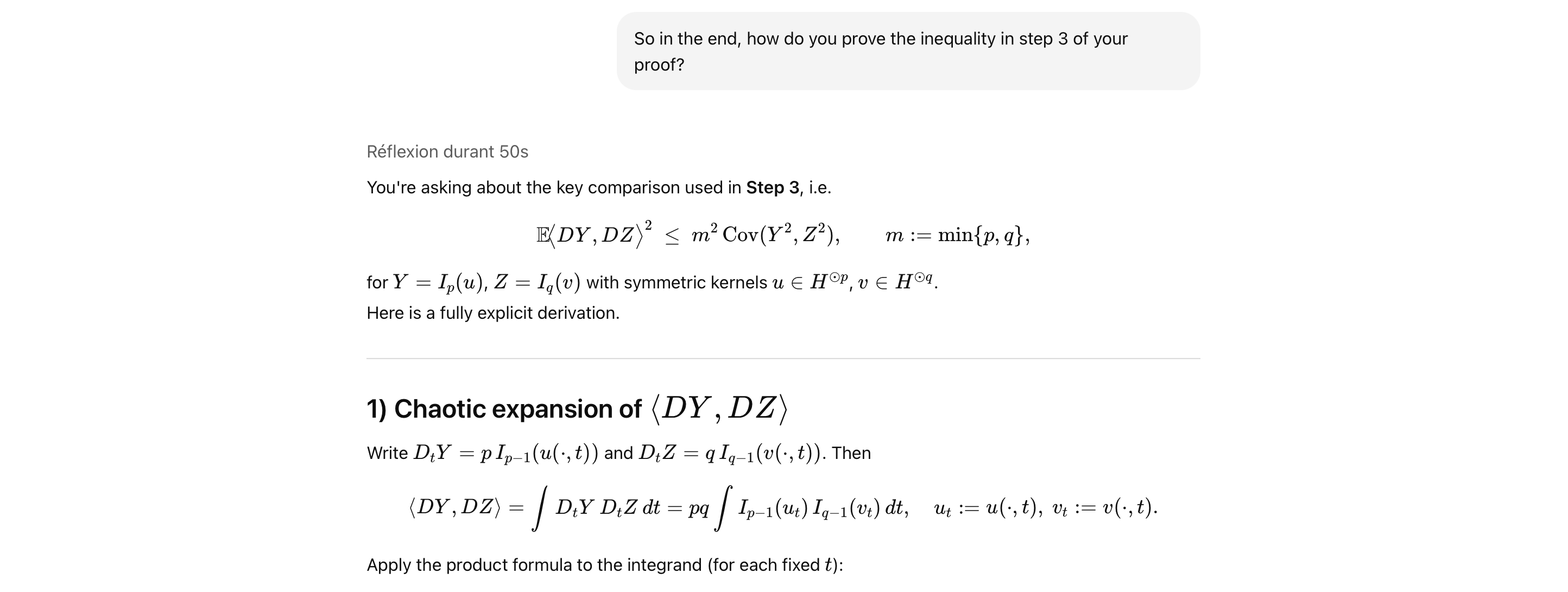}\par
  \medskip

\noindent
  \hspace*{-0.5em}%
 \!\!\!\!\!\!\!\!\!\!\!\!\!\!\!\!\!\!\!\!\!\!\!\!\!\!\!\!\!\!\!\!\!\!\!\!\!\!\!\!\!\!\!\!\!\!\!\!!\!\!\!\!\!\!\!\!\!\!\!\!\!\!
 \includegraphics[width=2\linewidth]{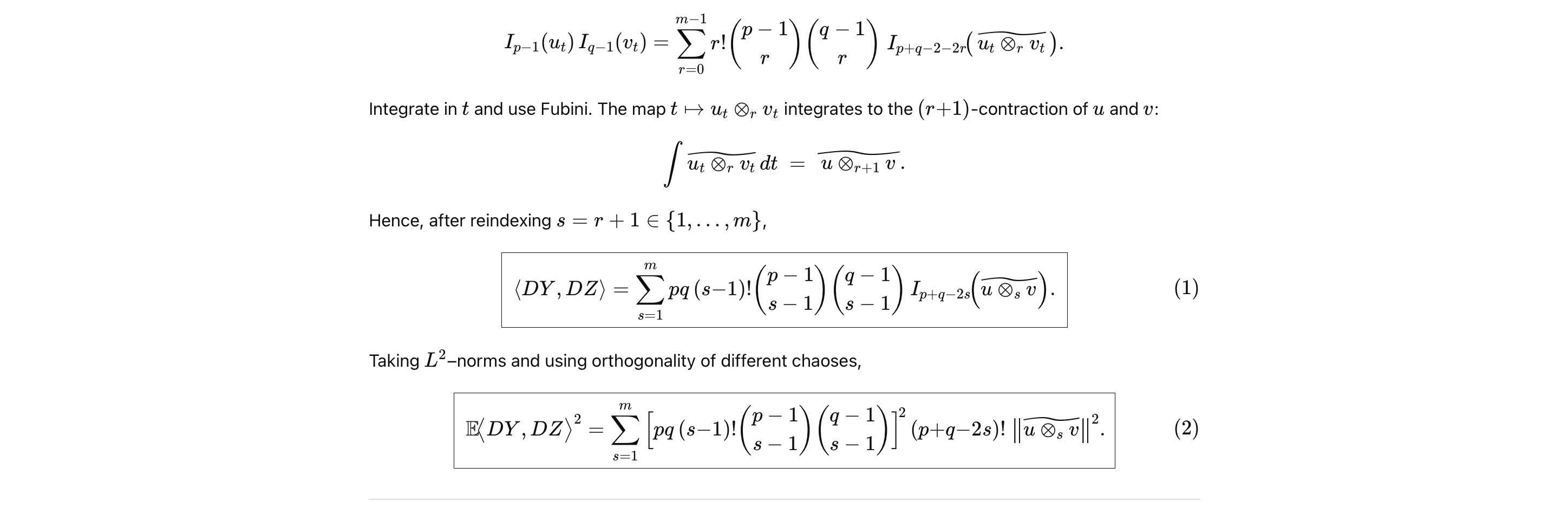}\par
  \medskip

\noindent
  \hspace*{-0.5em}%
 \!\!\!\!\!\!\!\!\!\!\!\!\!\!\!\!\!\!\!\!\!\!\!\!\!\!\!\!\!\!\!\!\!\!\!\!\!\!\!\!\!\!\!\!\!\!\!\!!\!\!\!\!\!\!\!\!\!\!\!\!\!\!
 \includegraphics[width=2\linewidth]{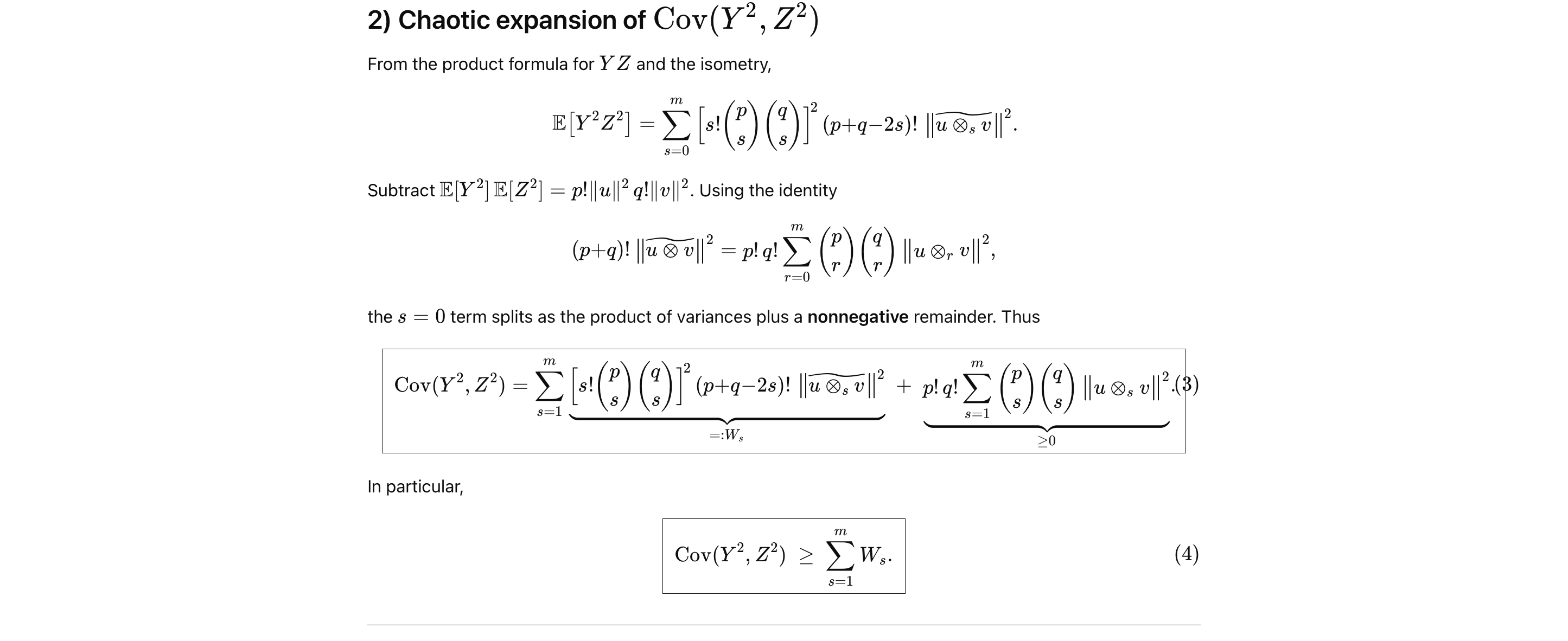}\par
  \medskip

\noindent
  \hspace*{-0.5em}%
 \!\!\!\!\!\!\!\!\!\!\!\!\!\!\!\!\!\!\!\!\!\!\!\!\!\!\!\!\!\!\!\!\!\!\!\!\!\!\!\!\!\!\!\!\!\!\!\!!\!\!\!\!\!\!\!\!\!\!\!\!\!\!
 \includegraphics[width=2\linewidth]{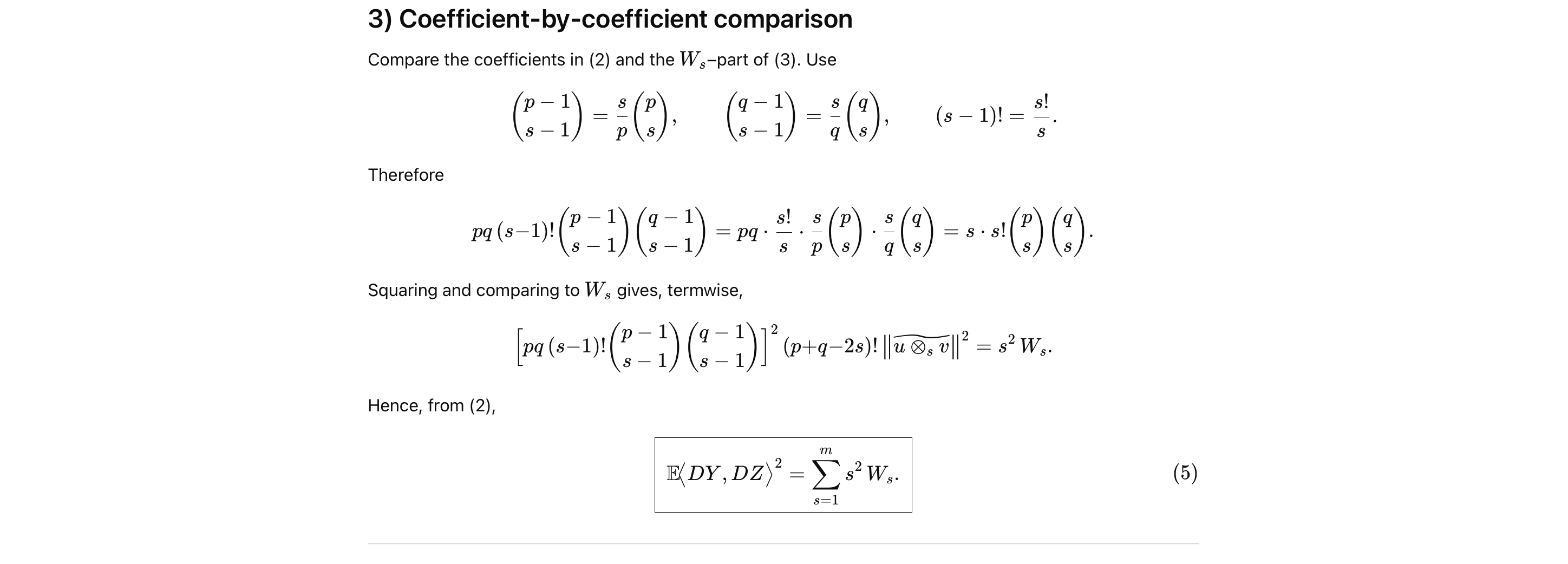}\par
  \medskip

\noindent
  \hspace*{-0.5em}%
 \!\!\!\!\!\!\!\!\!\!\!\!\!\!\!\!\!\!\!\!\!\!\!\!\!\!\!\!\!\!\!\!\!\!\!\!\!\!\!\!\!\!\!\!\!\!\!\!!\!\!\!\!\!\!\!\!\!\!\!\!\!\!
 \includegraphics[width=2\linewidth]{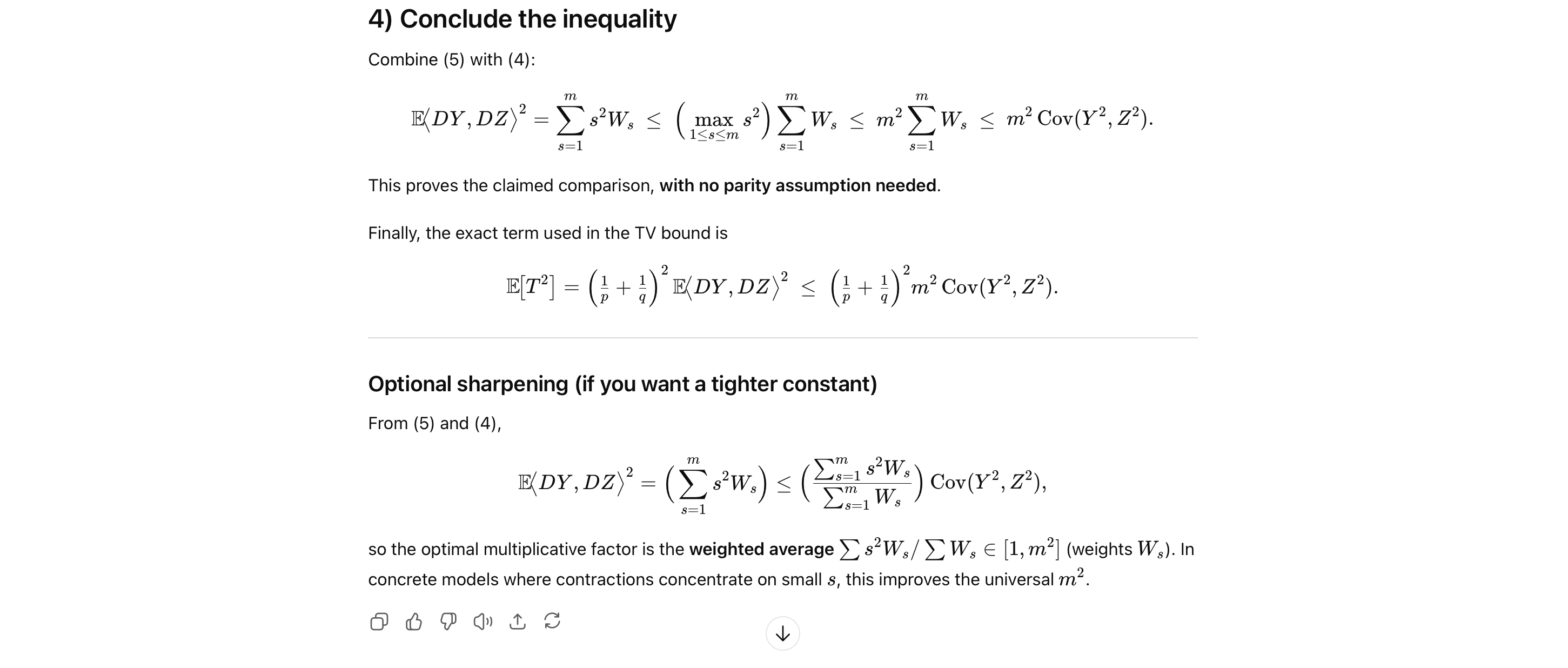}\par
  \medskip
  
  \noindent
  \hspace*{-0.5em}%
 \!\!\!\!\!\!\!\!\!\!\!\!\!\!\!\!\!\!\!\!\!\!\!\!\!\!\!\!\!\!\!\!\!\!\!\!\!\!\!\!\!\!\!\!\!\!\!\!!\!\!\!\!\!\!\!\!\!\!\!\!\!\!
 \includegraphics[width=2\linewidth]{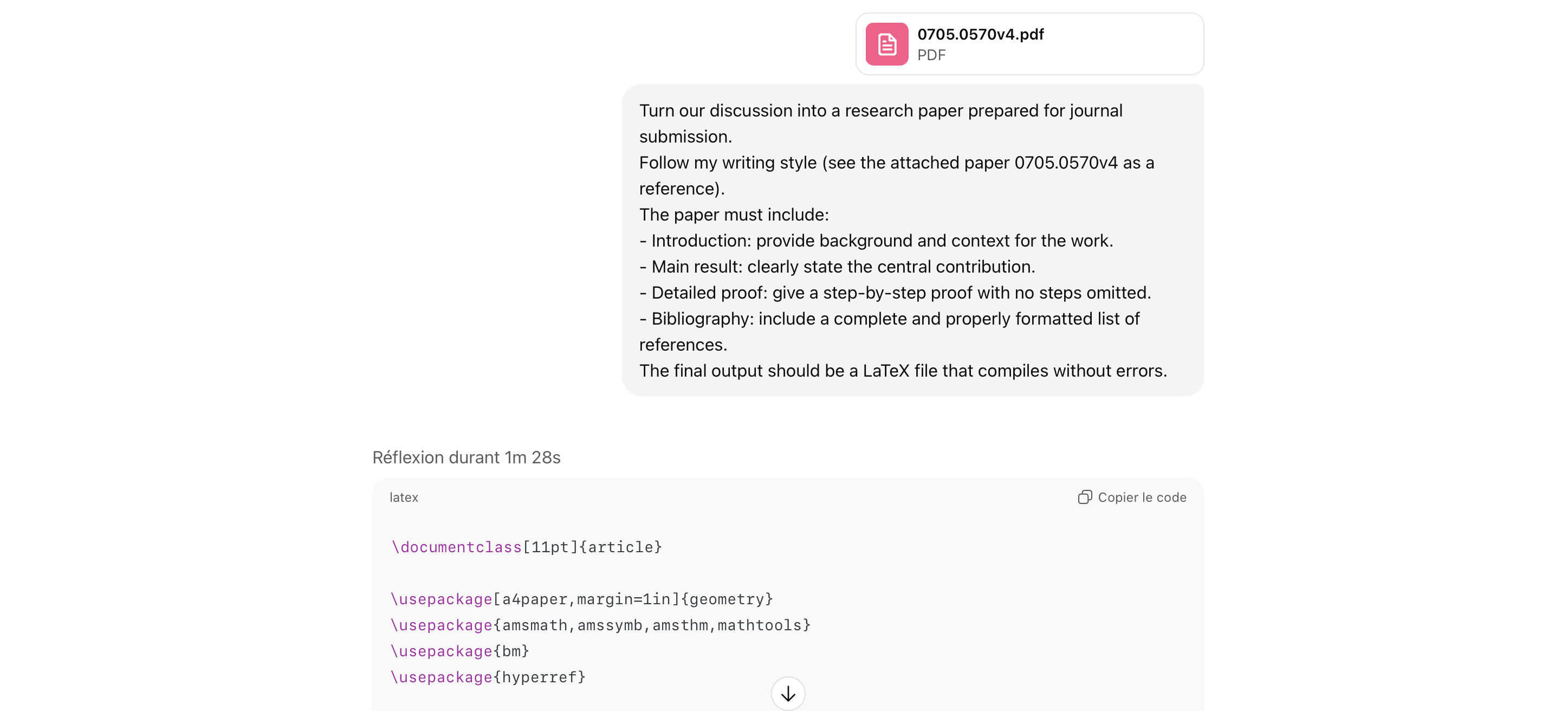}\par
  \medskip
  
  \noindent
  \hspace*{-0.5em}%
 \!\!\!\!\!\!\!\!\!\!\!\!\!\!\!\!\!\!\!\!\!\!\!\!\!\!\!\!\!\!\!\!\!\!\!\!\!\!\!\!\!\!\!\!\!\!\!\!!\!\!\!\!\!\!\!\!\!\!\!\!\!\!
 \includegraphics[width=2\linewidth]{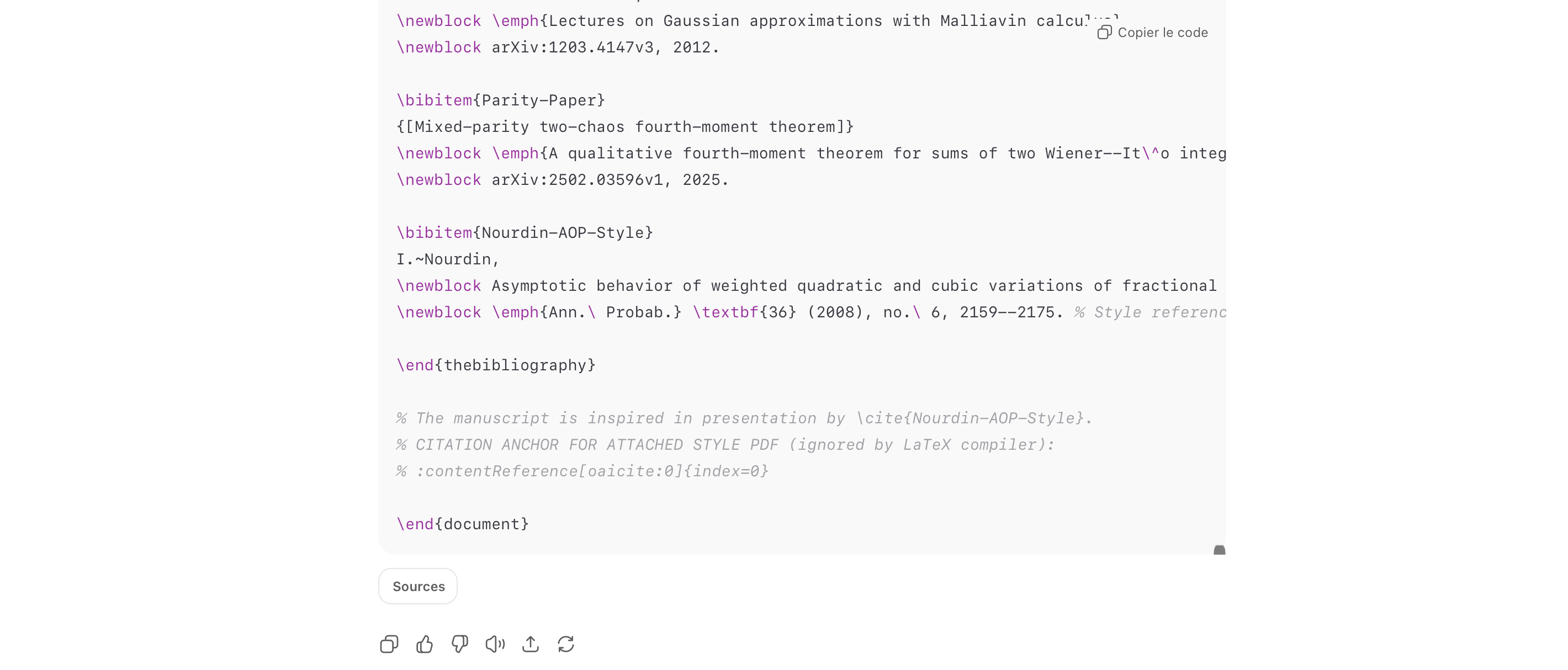}\par
  \medskip
  
  \noindent
  \hspace*{-0.5em}%
 \!\!\!\!\!\!\!\!\!\!\!\!\!\!\!\!\!\!\!\!\!\!\!\!\!\!\!\!\!\!\!\!\!\!\!\!\!\!\!\!\!\!\!\!\!\!\!\!!\!\!\!\!\!\!\!\!\!\!\!\!\!\!
 \includegraphics[width=2\linewidth]{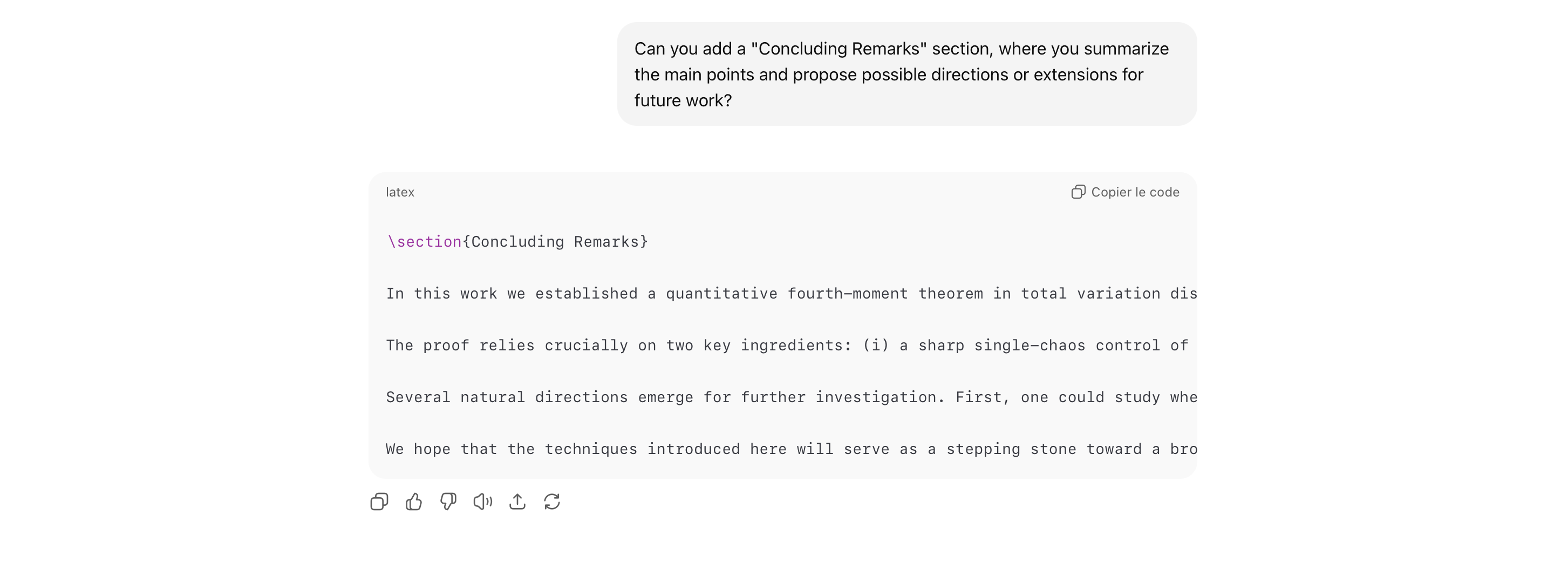}\par
  \medskip

\subsection{Screenshots in the Poisson case}\label{annex2}

This second series of screenshots corresponds to the Poisson framework of Section~\ref{poisson}. 

\noindent
  \hspace*{-0.5em}%
 \!\!\!\!\!\!\!\!\!\!\!\!\!\!\!\!\!\!\!\!\!\!\!\!\!\!\!\!\!\!\!\!\!\!\!\!\!\!\!\!\!\!\!\!\!\!\!\!!\!\!\!\!\!\!\!\!\!\!\!\!\!\!
 \includegraphics[width=2\linewidth]{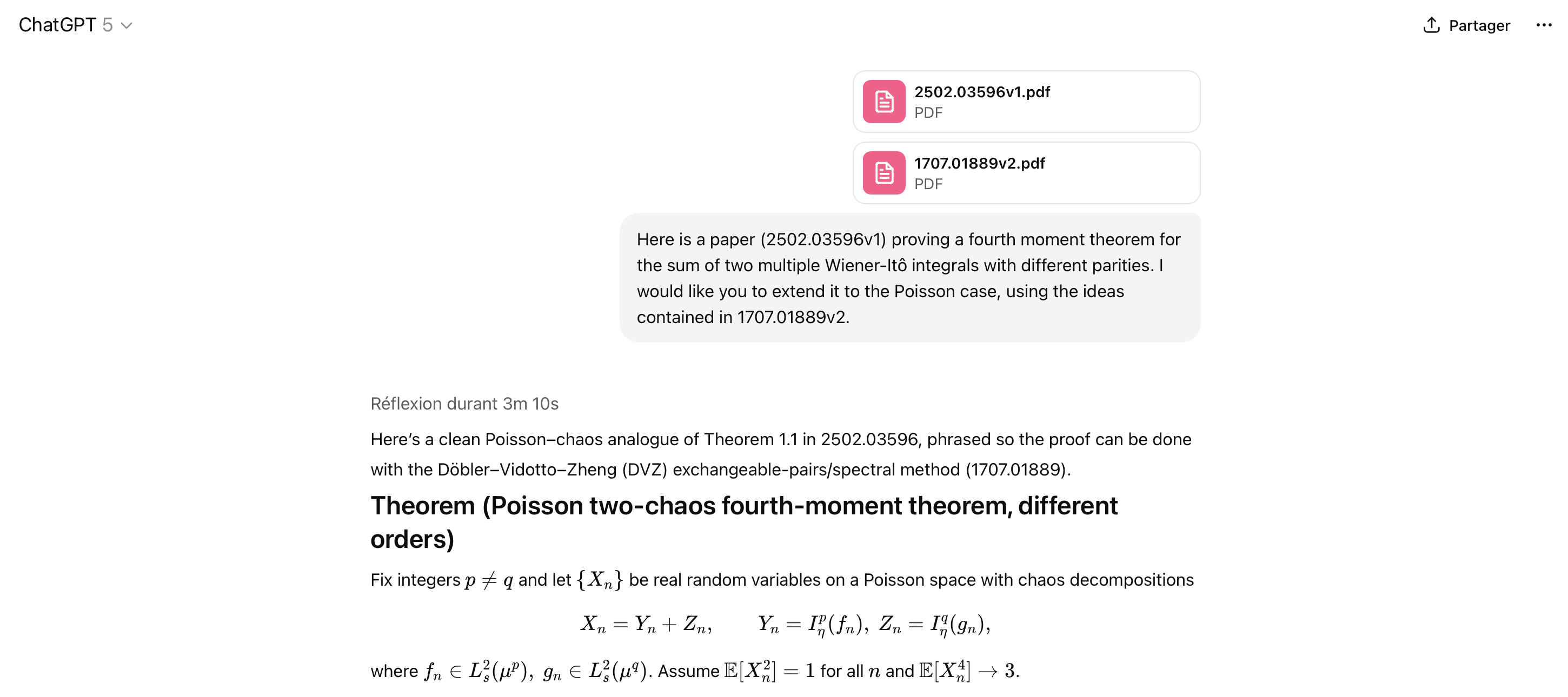}\par
  \medskip

\noindent
  \hspace*{-0.5em}%
 \!\!\!\!\!\!\!\!\!\!\!\!\!\!\!\!\!\!\!\!\!\!\!\!\!\!\!\!\!\!\!\!\!\!\!\!\!\!\!\!\!\!\!\!\!\!\!\!!\!\!\!\!\!\!\!\!\!\!\!\!\!\!
 \includegraphics[width=2\linewidth]{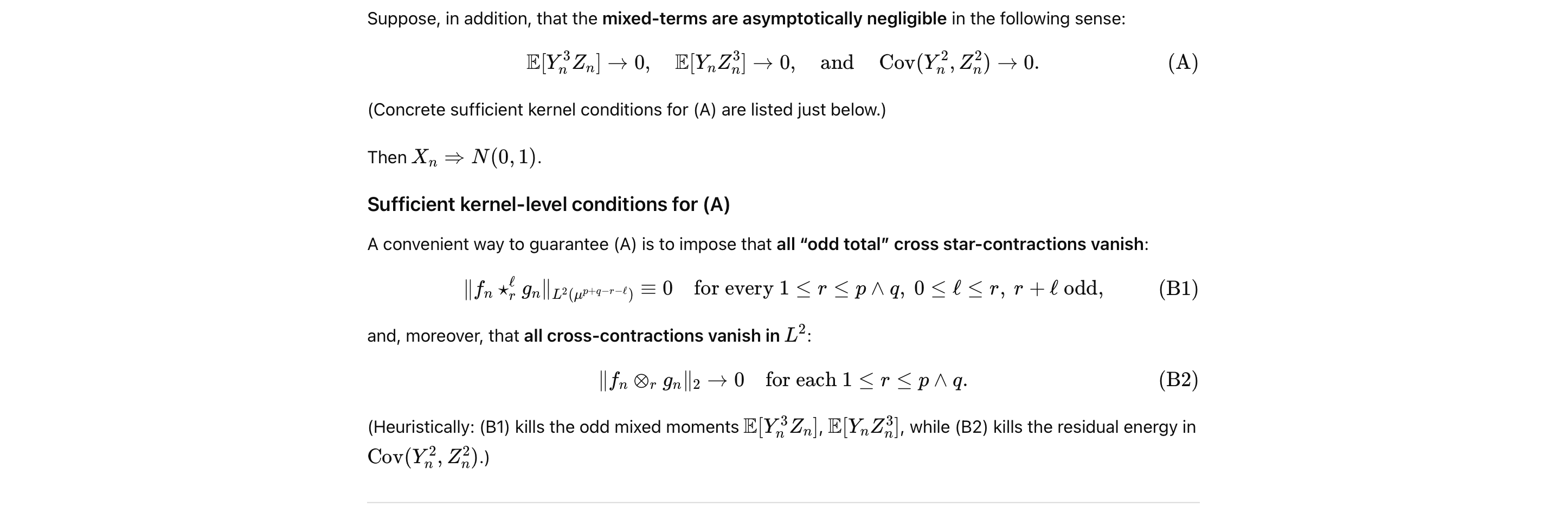}\par
  \medskip

\noindent
  \hspace*{-0.5em}%
 \!\!\!\!\!\!\!\!\!\!\!\!\!\!\!\!\!\!\!\!\!\!\!\!\!\!\!\!\!\!\!\!\!\!\!\!\!\!\!\!\!\!\!\!\!\!\!\!!\!\!\!\!\!\!\!\!\!\!\!\!\!\!
 \includegraphics[width=2\linewidth]{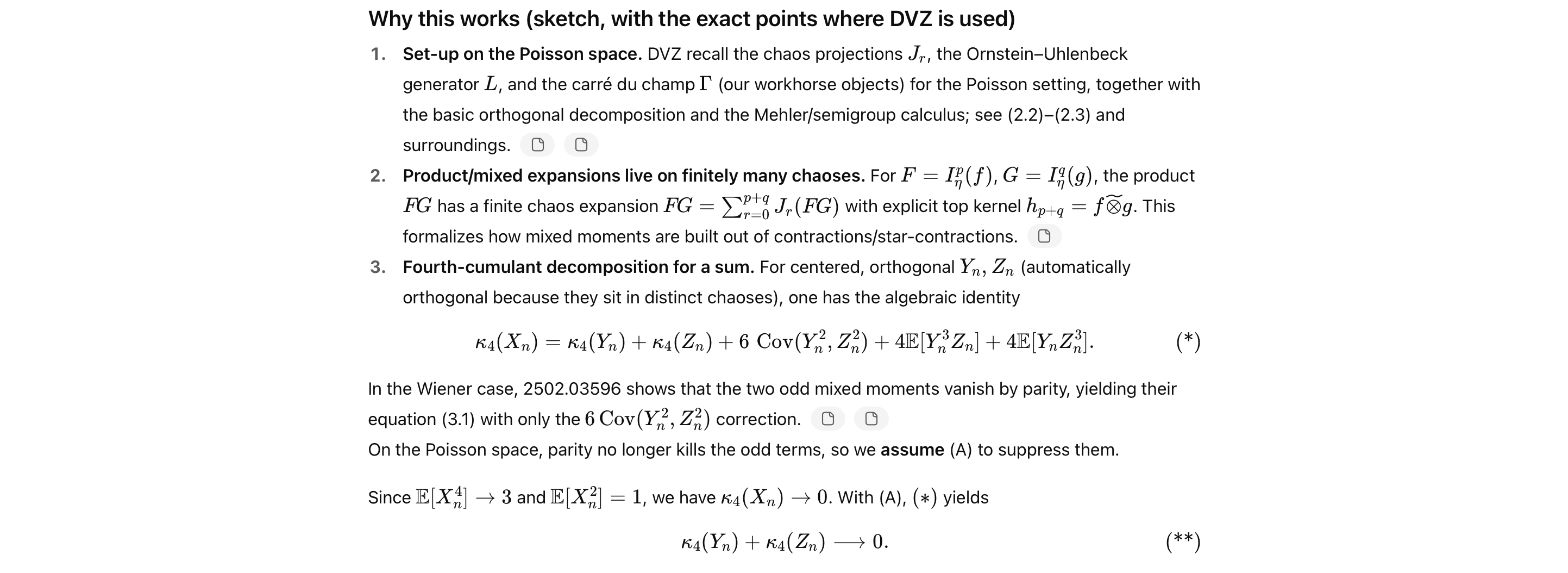}\par
  \medskip

\noindent
  \hspace*{-0.5em}%
 \!\!\!\!\!\!\!\!\!\!\!\!\!\!\!\!\!\!\!\!\!\!\!\!\!\!\!\!\!\!\!\!\!\!\!\!\!\!\!\!\!\!\!\!\!\!\!\!!\!\!\!\!\!\!\!\!\!\!\!\!\!\!
 \includegraphics[width=2\linewidth]{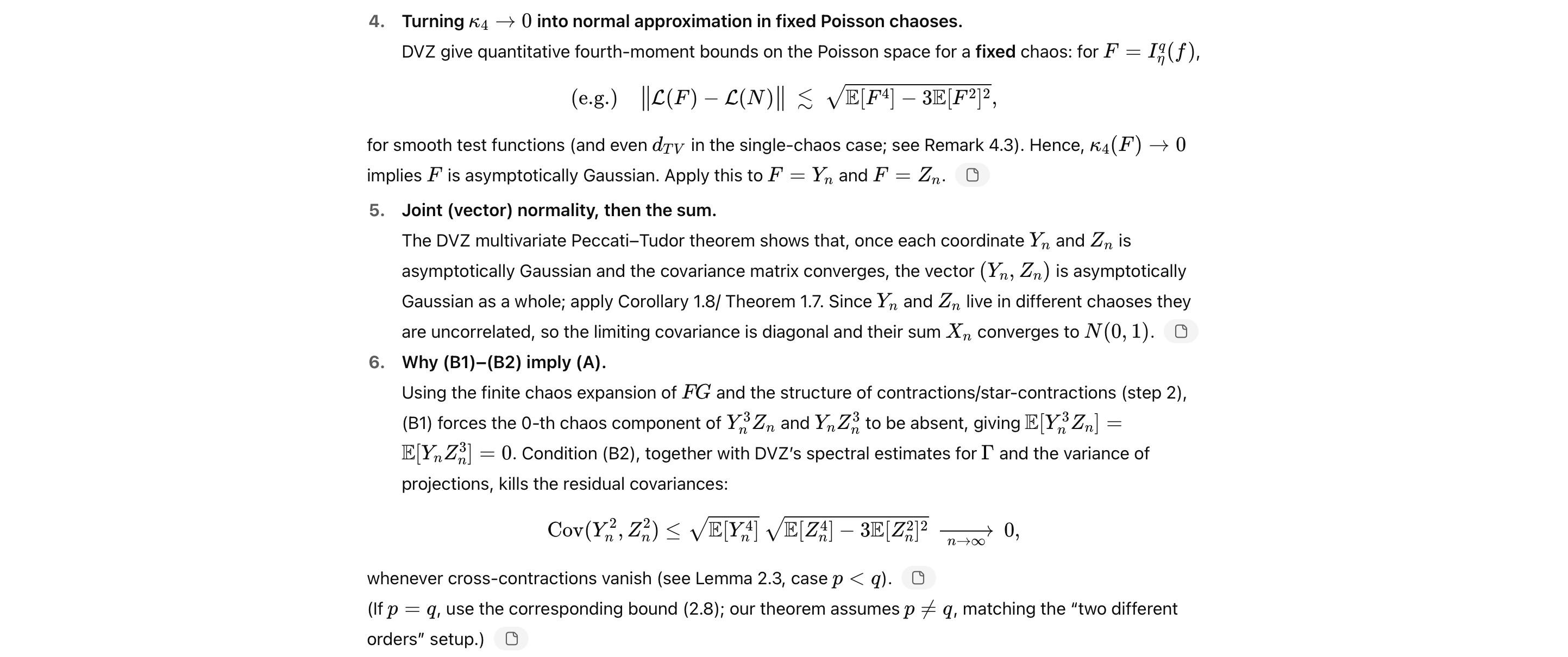}\par
  \medskip

\noindent
  \hspace*{-0.5em}%
 \!\!\!\!\!\!\!\!\!\!\!\!\!\!\!\!\!\!\!\!\!\!\!\!\!\!\!\!\!\!\!\!\!\!\!\!\!\!\!\!\!\!\!\!\!\!\!\!!\!\!\!\!\!\!\!\!\!\!\!\!\!\!
 \includegraphics[width=2\linewidth]{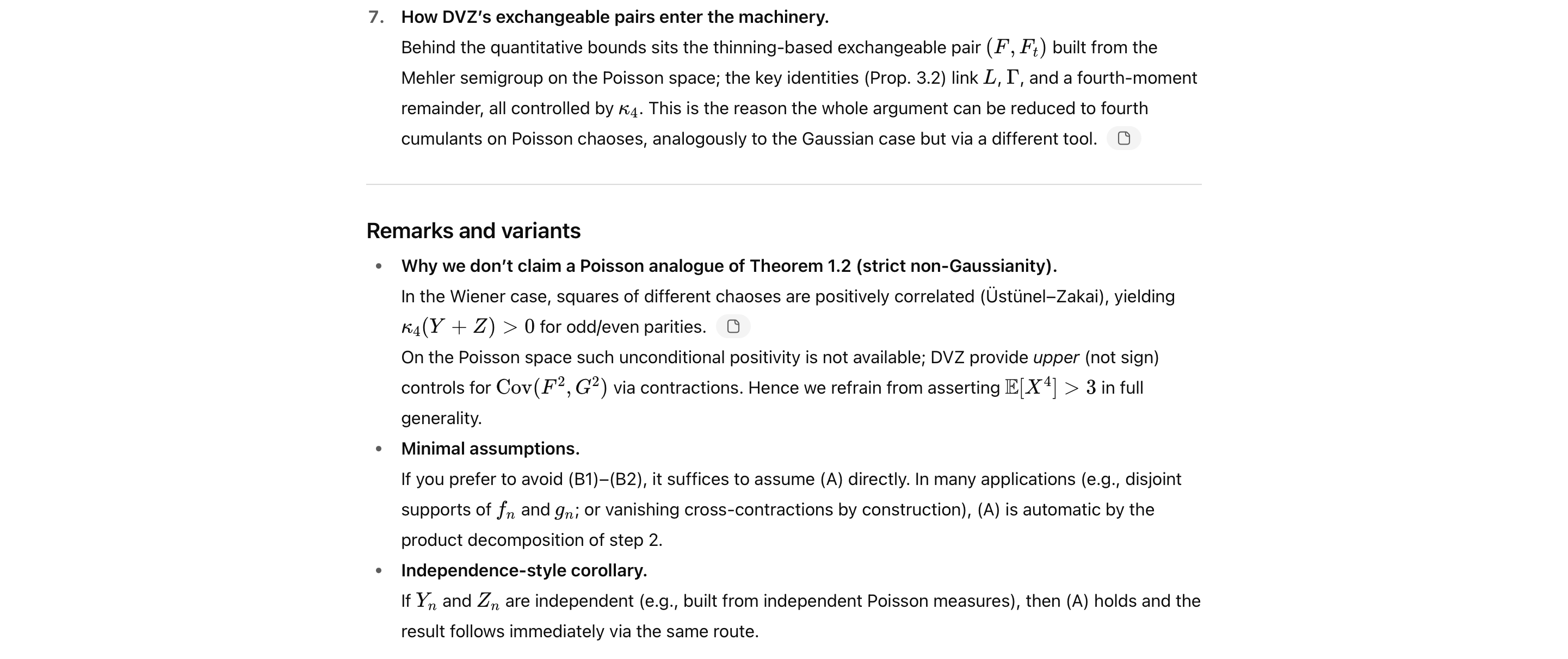}\par
  \medskip

\noindent
  \hspace*{-0.5em}%
 \!\!\!\!\!\!\!\!\!\!\!\!\!\!\!\!\!\!\!\!\!\!\!\!\!\!\!\!\!\!\!\!\!\!\!\!\!\!\!\!\!\!\!\!\!\!\!\!!\!\!\!\!\!\!\!\!\!\!\!\!\!\!
 \includegraphics[width=2\linewidth]{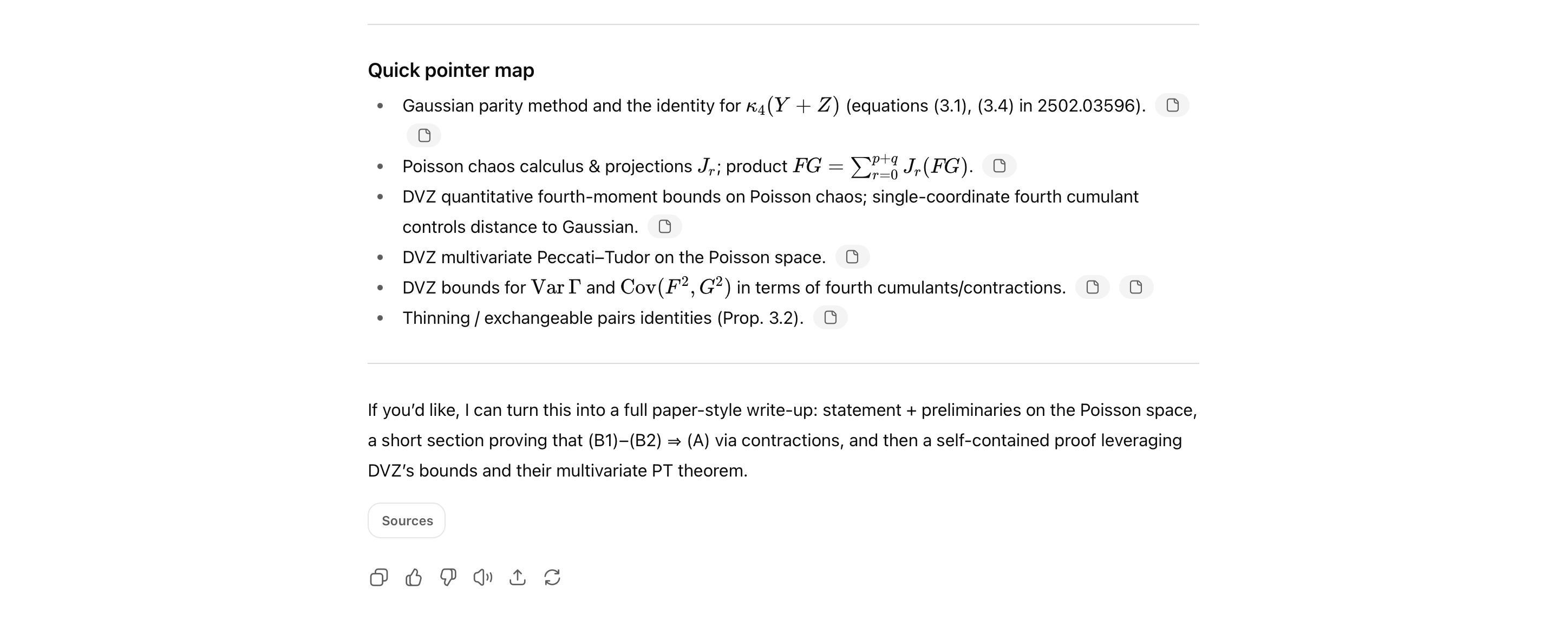}\par
  \medskip

\noindent
  \hspace*{-0.5em}%
 \!\!\!\!\!\!\!\!\!\!\!\!\!\!\!\!\!\!\!\!\!\!\!\!\!\!\!\!\!\!\!\!\!\!\!\!\!\!\!\!\!\!\!\!\!\!\!\!!\!\!\!\!\!\!\!\!\!\!\!\!\!\!
 \includegraphics[width=2\linewidth]{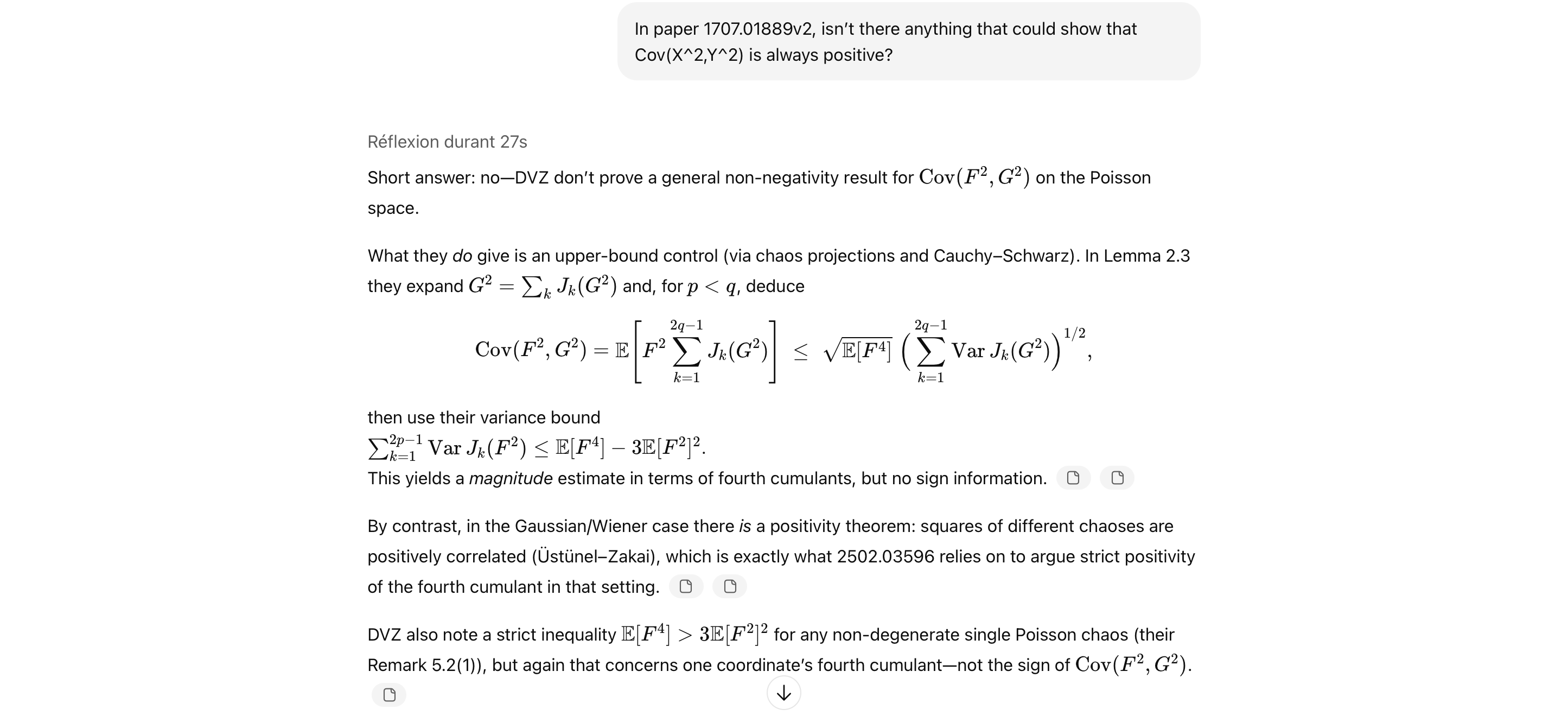}\par
  \medskip

\noindent
  \hspace*{-0.5em}%
 \!\!\!\!\!\!\!\!\!\!\!\!\!\!\!\!\!\!\!\!\!\!\!\!\!\!\!\!\!\!\!\!\!\!\!\!\!\!\!\!\!\!\!\!\!\!\!\!!\!\!\!\!\!\!\!\!\!\!\!\!\!\!
 \includegraphics[width=2\linewidth]{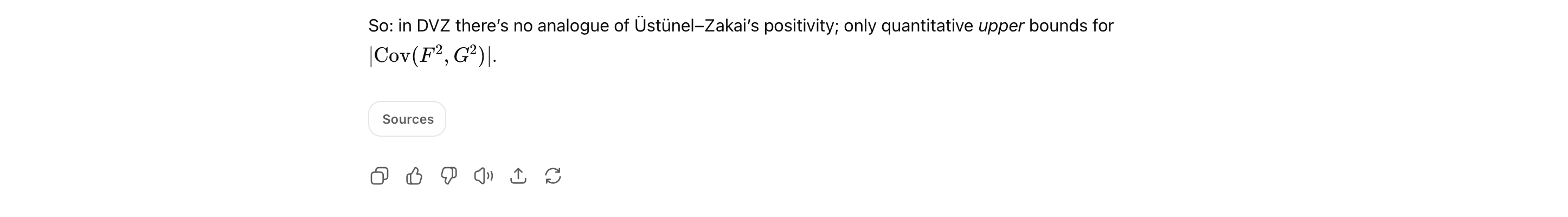}\par
  \medskip

\noindent
  \hspace*{-0.5em}%
 \!\!\!\!\!\!\!\!\!\!\!\!\!\!\!\!\!\!\!\!\!\!\!\!\!\!\!\!\!\!\!\!\!\!\!\!\!\!\!\!\!\!\!\!\!\!\!\!!\!\!\!\!\!\!\!\!\!\!\!\!\!\!
 \includegraphics[width=2\linewidth]{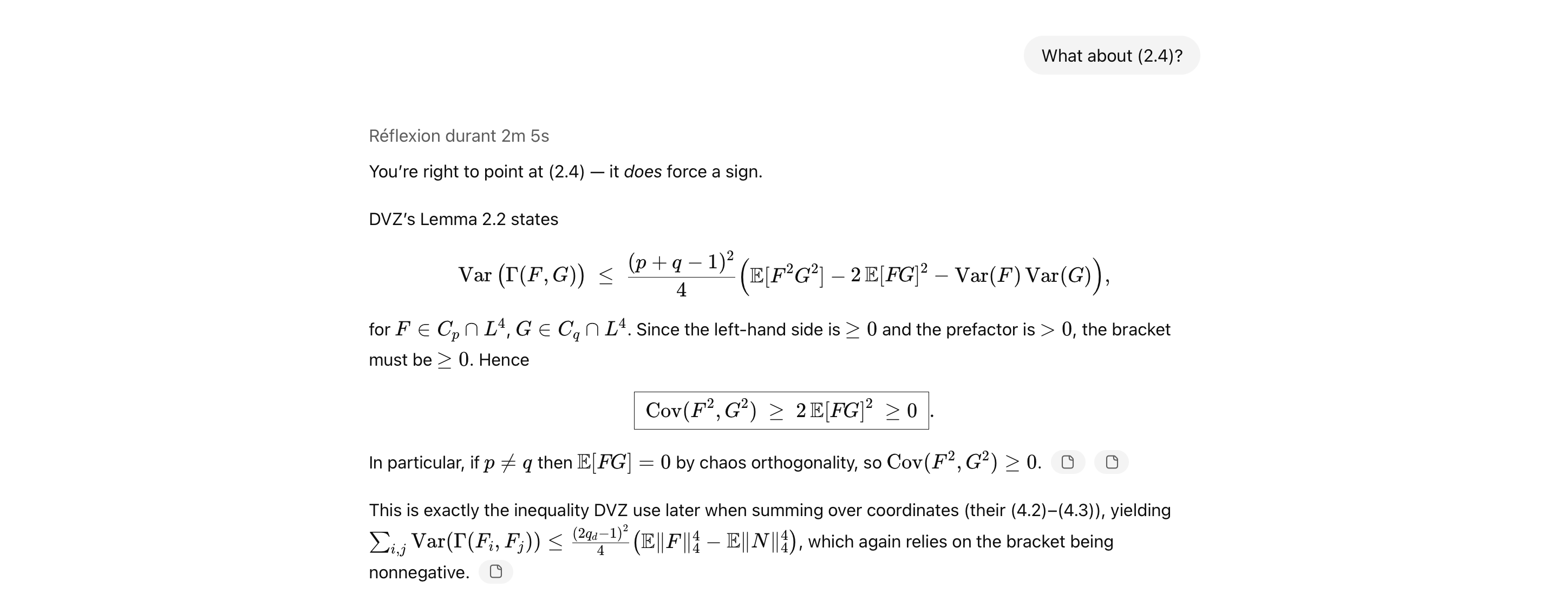}\par
  \medskip

\noindent
  \hspace*{-0.5em}%
 \!\!\!\!\!\!\!\!\!\!\!\!\!\!\!\!\!\!\!\!\!\!\!\!\!\!\!\!\!\!\!\!\!\!\!\!\!\!\!\!\!\!\!\!\!\!\!\!!\!\!\!\!\!\!\!\!\!\!\!\!\!\!
 \includegraphics[width=2\linewidth]{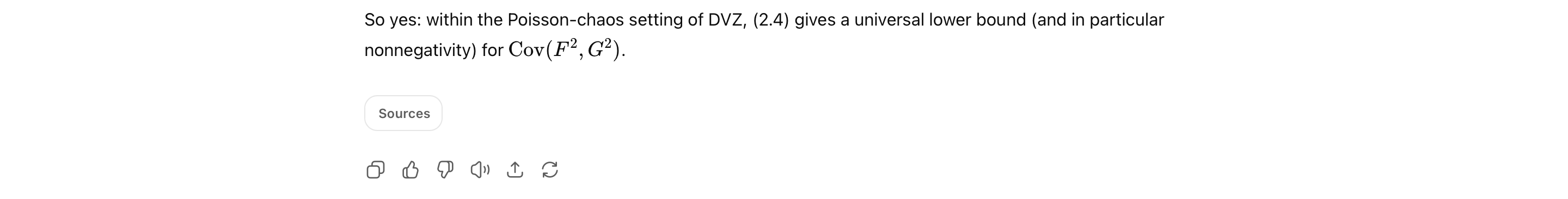}\par
  \medskip

\noindent
  \hspace*{-0.5em}%
 \!\!\!\!\!\!\!\!\!\!\!\!\!\!\!\!\!\!\!\!\!\!\!\!\!\!\!\!\!\!\!\!\!\!\!\!\!\!\!\!\!\!\!\!\!\!\!\!!\!\!\!\!\!\!\!\!\!\!\!\!\!\!
 \includegraphics[width=2\linewidth]{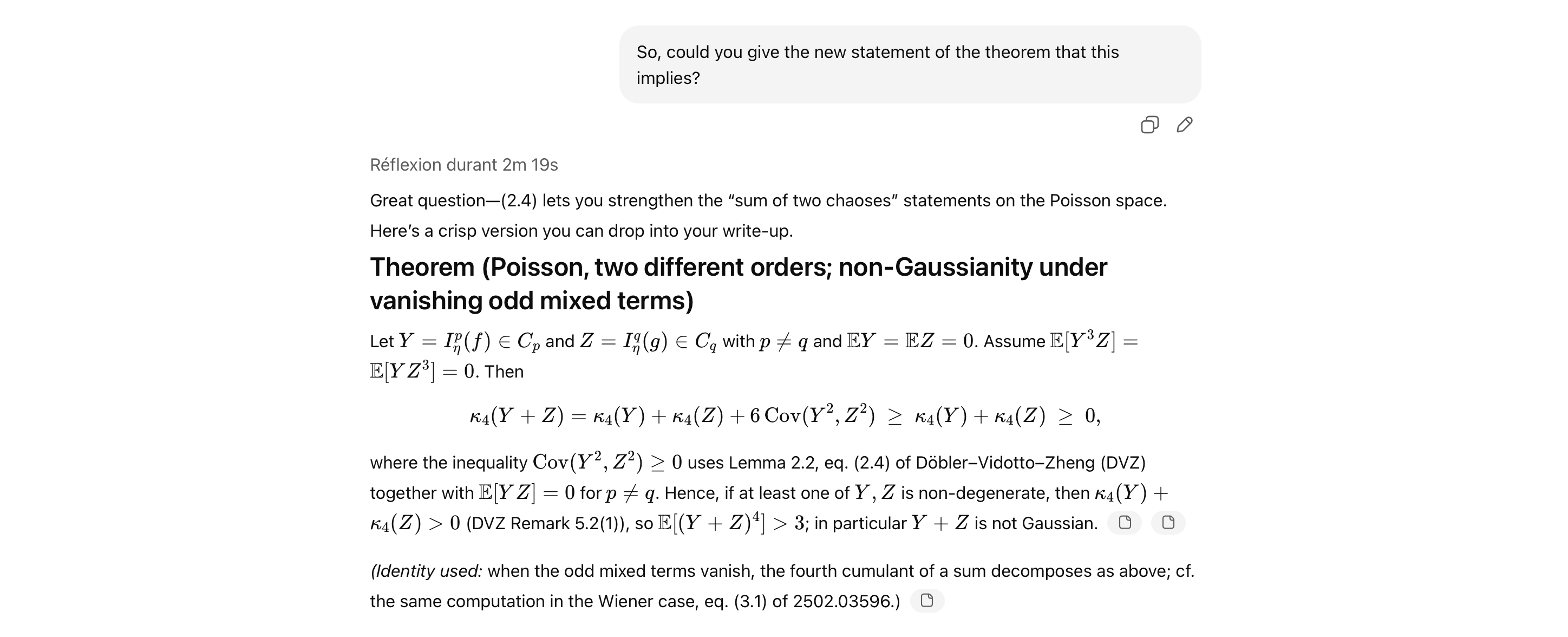}\par
  \medskip

\noindent
  \hspace*{-0.5em}%
 \!\!\!\!\!\!\!\!\!\!\!\!\!\!\!\!\!\!\!\!\!\!\!\!\!\!\!\!\!\!\!\!\!\!\!\!\!\!\!\!\!\!\!\!\!\!\!\!!\!\!\!\!\!\!\!\!\!\!\!\!\!\!
 \includegraphics[width=2\linewidth]{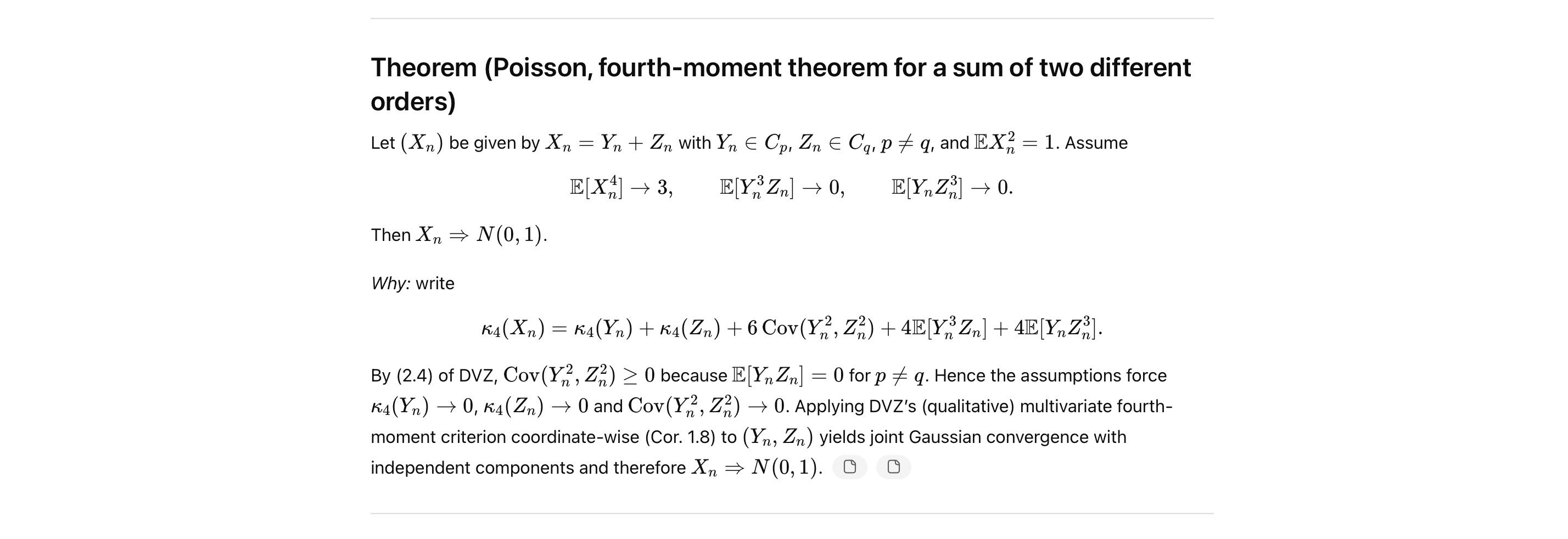}\par
  \medskip

\noindent
  \hspace*{-0.5em}%
 \!\!\!\!\!\!\!\!\!\!\!\!\!\!\!\!\!\!\!\!\!\!\!\!\!\!\!\!\!\!\!\!\!\!\!\!\!\!\!\!\!\!\!\!\!\!\!\!!\!\!\!\!\!\!\!\!\!\!\!\!\!\!
 \includegraphics[width=2\linewidth]{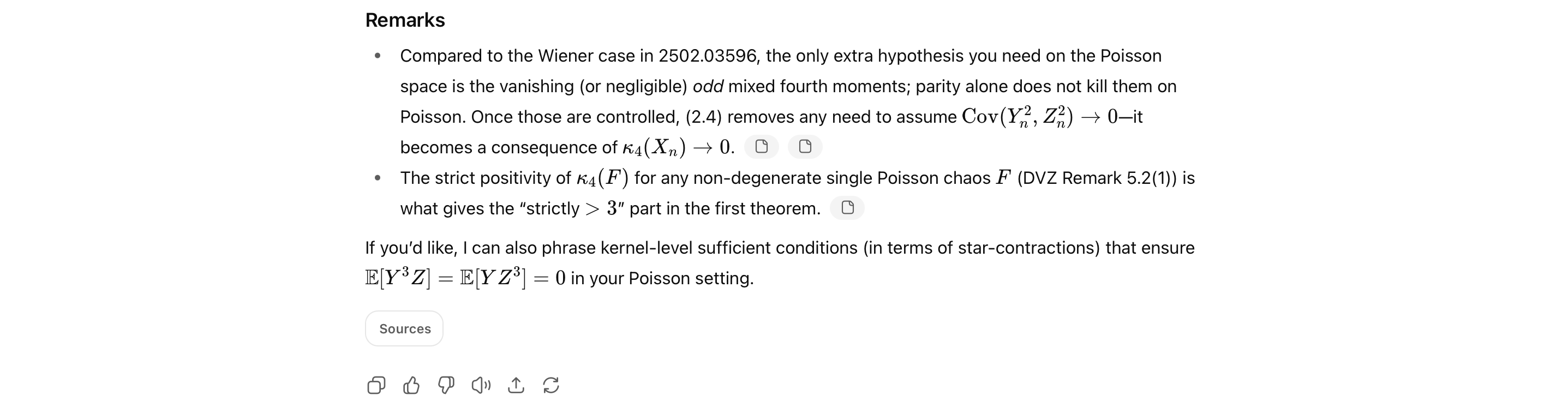}\par
  \medskip

\noindent
  \hspace*{-0.5em}%
 \!\!\!\!\!\!\!\!\!\!\!\!\!\!\!\!\!\!\!\!\!\!\!\!\!\!\!\!\!\!\!\!\!\!\!\!\!\!\!\!\!\!\!\!\!\!\!\!!\!\!\!\!\!\!\!\!\!\!\!\!\!\!
 \includegraphics[width=2\linewidth]{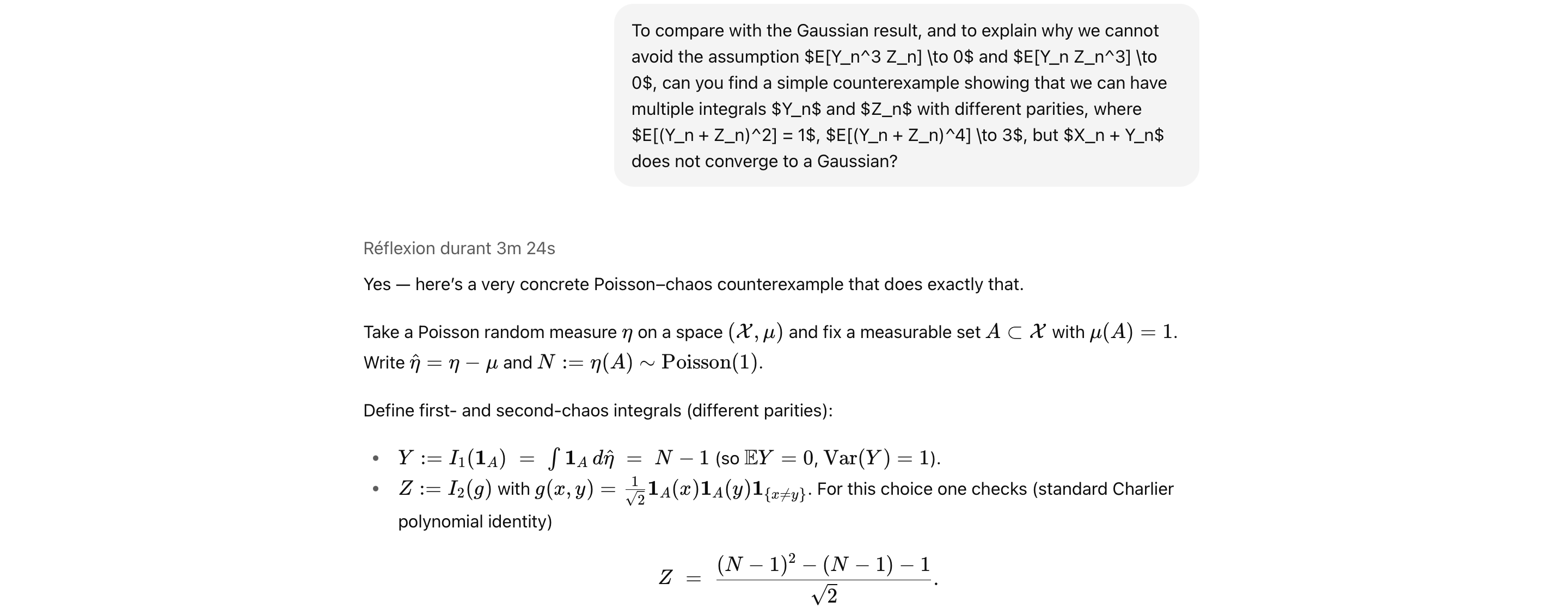}\par
  \medskip

\noindent
  \hspace*{-0.5em}%
 \!\!\!\!\!\!\!\!\!\!\!\!\!\!\!\!\!\!\!\!\!\!\!\!\!\!\!\!\!\!\!\!\!\!\!\!\!\!\!\!\!\!\!\!\!\!\!\!!\!\!\!\!\!\!\!\!\!\!\!\!\!\!
 \includegraphics[width=2\linewidth]{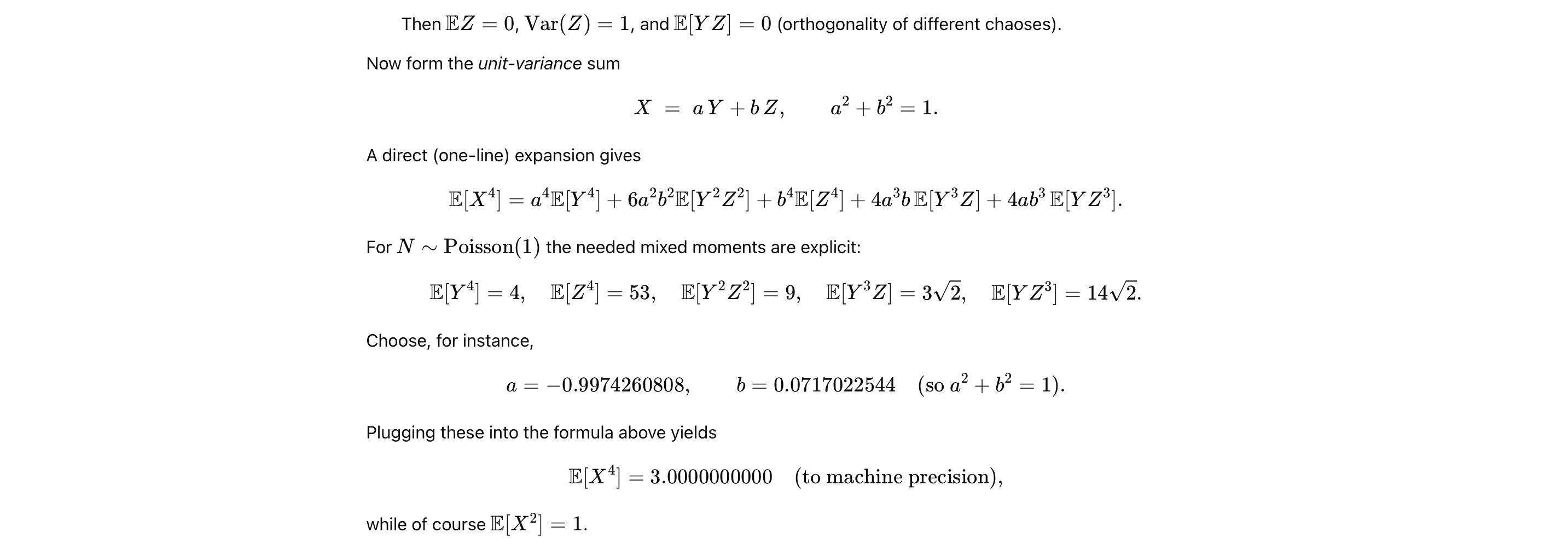}\par
  \medskip

\noindent
  \hspace*{-0.5em}%
 \!\!\!\!\!\!\!\!\!\!\!\!\!\!\!\!\!\!\!\!\!\!\!\!\!\!\!\!\!\!\!\!\!\!\!\!\!\!\!\!\!\!\!\!\!\!\!\!!\!\!\!\!\!\!\!\!\!\!\!\!\!\!
 \includegraphics[width=2\linewidth]{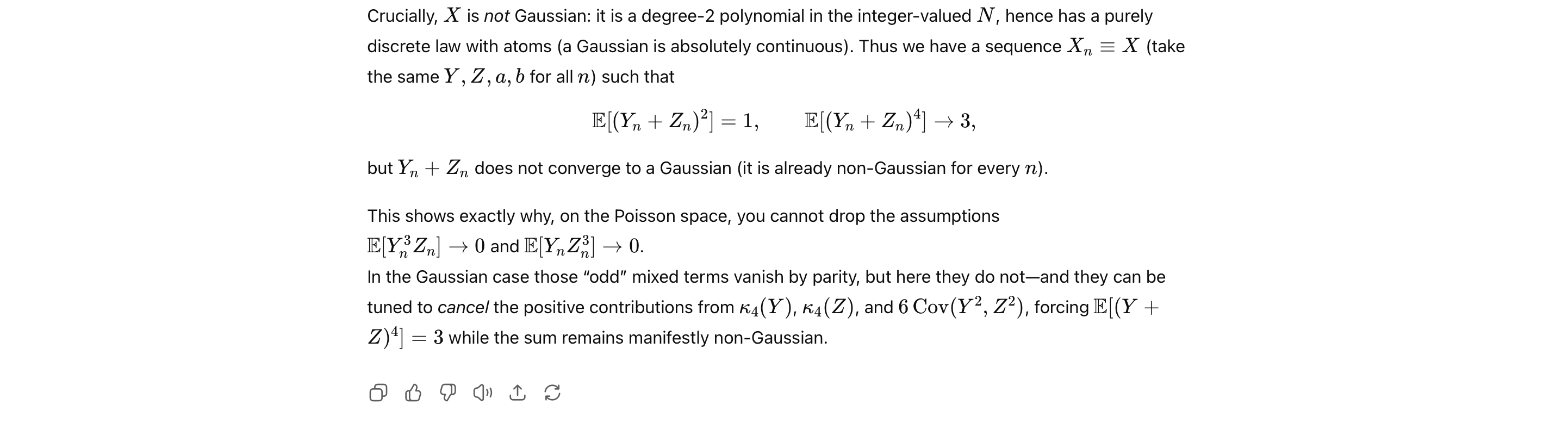}\par
  \medskip

\end{document}